\documentclass[12 pt]{article}
\usepackage{amsmath}
\usepackage{amsthm}
\usepackage{graphicx}
\usepackage{bbm,amssymb}
\usepackage[hyperfootnotes=false,colorlinks]{hyperref}
%\usepackage{verbatim}
%\usepackage{showlabels}
% \usepackage{enumitem}
%\usepackage{enumerate}
%\showlabels{cite}
%\usepackage{quoting} (WARNING: can't find quoting.sty)
%\quotingsetup{leftmargin=0pt}

\addtolength{\oddsidemargin}{-.0in}
\addtolength{\evensidemargin}{-.0in}
\addtolength{\textwidth}{.0in}

\def\om{\omega}
\def\O{\Omega}
\def\d{\delta}
\def\Dl{\Delta}
\def\ep{\epsilon}

\def\k{\kappa}

\def\f{\phi}

\def\Lm{\Lambda}
\def\l{\lambda}
\def\y{\eta}

\def\all{\forall}

\def\comp{\ensuremath\mathop{\scalebox{.6}{$\circ$}}}
\def\conj{\overline}

\def\oo{\infty}
\def\para{\parallel}
\def\tld{\tilde}
\def\wed{\wedge}

\def\de{\partial}
\def\gr{\nabla}
\def\inv{^{-1}}
\def\sub{\subset}

\newcommand{\rc}[1]{\frac{1}{#1}} %reciprocal

\newcommand{\R}{{\mathbb{R}}}

\newcommand{\N}{{\mathbb{N}}}

\def\bx{{\mathbf{x}}}
\def\by{{\mathbf{y}}}
\def\ytd{\tilde{\y}}

\newtheorem{lem}{Lemma}
\newtheorem{prop}{Proposition}
\newtheorem{rem}{Remark}
\newtheorem{cor}{Corollary}

\newtheorem{thm}{Theorem}

\title{Locating the first nodal set in higher dimensions}
\author{Fan Zheng \footnote{Massachusetts Institute of Technology}
}

\date{04/17/13}

\begin{document}

\maketitle
\renewcommand{\thefootnote}{}
Mathematics Subject Classifications: Primary 35J25; Secondary 35B05, 35P15
\renewcommand{\thefootnote}{\arabic{footnote}}

\begin{abstract} 
This paper estimates the location and the width of the nodal set of the first Neumann eigenfunctions on a smooth convex domain $\O \sub \R^n$, whose length is normalized to be 1 and whose cross-section is contained in a ball of radius $\ep$. In \cite{CJK2009}, an $O(\ep)$ bound was obtained by constructing a coordinate system. In this paper, we present a simpler method that does not require such a coordinate system. Moreover, in the special case $n = 2$, we obtain an $O(\ep^2)$ bound on the width of the nodal set, in analogy to the corresponding result in the Dirichlet case obtained in \cite{GJ1995}.

%TO DO: 
%A more natural proof of cross-sectionval variation

%Simplicity of eigenvalue via spectral gap

%ep^2 localtion in higher dimension via Green's function
\end{abstract}

\pagebreak
\tableofcontents

\section{Introduction and Statement of Results}
This paper concerns the nodal set of the eigenfunction of the Laplacian. The nodal set is the set of zeros of the eigenfunction. The geometry of the nodal set is greatly affected by the domain on which the Laplacian is defined. In particular, if the domain is long and narrow, then intuition suggests that the nodal set should be concentrated around a hyperplane because there is little room for it to "wiggle around". Moreover, since the domain can be approximated by an inhomogeneous rod, the location of the nodal set can be approximated by the zero of the eigenfunction of a suitable ordinary differential equation. Although such an approximation is intuitively plausible, its validity needs to be rigorously established. The Dirichlet eigenfunction of a planar convex domain was investigated in \cite{J1995} and \cite{GJ1996}. The corresponding Neumann problem (still in 2 dimensions) was studied in \cite{J2000}, using the variation of the eigenvalue of the auxillary ordinary differential equation. That result was extended to $n$ dimensions in \cite{CJK2009}, in which a coordinate systme was constructed to transform the domain to the cylinder $[0, 1] \times B^{n-1}(1)$.

In this paper, we rederive the result in \cite{CJK2009} for the $n$-dimensional Neumann nodal set using a different approach, and improve the estimate when $n=2$. Before laying out our plan, we first need to fix some notations and normalization conventions. We suppose that $\O \sub \R^n$ is a smooth convex domain that spans a length of 1 in one direction but has widths less than $\ep$ in the other $n-1$ directions, i.e.,
\[
\O \sub [0, 1] \times B^{n-1}(\ep)
\]
where $B^{n-1}(\ep)$ is the open $(n-1)$-dimensional ball of radius $\ep$ centered at 0. We will denote a generic point in $\R^n$ by $\bx = (x, \by)$. With this notation, we define the cross-section of $\O$ to be
\[
\O(s) := \O \cap \{x = s\} \neq \varnothing
\]
for $s \in (0, 1)$. Let $\om(s) = |\O(s)|_{\R^{n-1}}$ denote the $(n-1)$-dimensional volume of $\O(s)$.

Let $u$ be an eigenfunction with the smallest nonzero eigenvalue $\l$ of the Neumann problem
\begin{equation}\label{PDE}
\begin{cases}
\Dl u(\bx) = -\l u(\bx),\, \bx \in \O\\
u_\nu(\bx) = 0,\, \bx \in \de \O
\end{cases}
\end{equation}
where $u_\nu$ denotes the normal derivative in the outward direction. We let
\[
\Lm = \{\bx \in \O: u(\bx) = 0\}
\]
denote the nodal set of $u$. In this paper we will provide estimates on both the location (in terms of the projection onto the $x$ coordinate) and the width (in terms of the length of such a projection). Our approach starts by looking at the average of the eigenfuction
\begin{equation}\label{deff}
\bar u(x) = \frac{\int_{\O(x)} u(x,\by)d\by}{\om(x)}
\end{equation}
on the cross-section and deriving an ODE that approximates the behavior of $\f$. As has been observed by \cite{CJK2009}, the energy functional of this ODE resembles the one dimensional Neumann problem:
\begin{equation}\label{ODE}
-(\om\f')' = \mu\om\f
\end{equation}
Therefore, one eigenfunction can be used as a test function of the energy functional related to the other eigenfunction. By the variational principle, the Neumann eigenvalue $\l$ of the Laplacian can be approximated by the correponding eigenvalue $\mu$ of (\ref{ODE}), provided that the error term arising from this approximation is appropriately bounded. One source of the error term comes from the variation of the boundary of the cross-section $\O(x)$, which can be controlled by the convexity of $\O$. Another source of error is the transverse variation of $u$ across $\O(x)$, which we control by using a gradient estimate in \cite{CJK2009}. Now, by standard ODE comparison theorems, $\bar u$ itself can be approximated by the Neumann eigenfunction $\f$, which means that the zero of $\bar u$ is near the zero of $\f$. Since we have already controlled the transverse variation of $u$, the nodal set $\Lm$ can be nailed down with good precision.

In two dimensions, we have more to say. The only ``transverse" variation of $u$ is $\de_y u$, which satisfies the same PDE as $u$ in $\O$. The boundary value of $\de_y u$ is controlled by $\de_x u$ via the Neumann boundary constraint, which in turn must satisfy the gradient bounds. Thererfore, we can apply the maximum principle to $\de_y u$ to obtain a better bound on the transverse variation of $u$, narrowing the difference between $u$ and $\bar u$, and further nailing down the width of $\Lm$.

The difficulties we have experienced when pushing the $O(\ep^2)$ estimates to higher dimensions are those typical of Sobolev-type estimates. The energy constraint imposed by the eigenvalue is on the $L^2$ norm of the variation of $u$ across $\O(x)$, while the width of the nodal set is essentially its $L^\oo$ norm. So far the author is only able to obtain an $O(\ep^{1+\rc{2(n-1)}})$ bound in the $n$ dimensional case, an estimate that deterioriates as $n \to \oo$.

Another difficulty arises when one attempts to drop the smoothness of $\O$. A natural approach is to approximate $\O$ by a sequence of smooth domains $\O_k$, and derive the convergence of the corresponding eigenfunctions via the gradient bound (\ref{grleu}), as sketched in the last paragraph of \cite{CJK2009}. However, in order for this approach to work, one needs to establish the simplicity of the first Neumann eigenvalue of $\O$. This was done in \cite{B1999} in 2 dimensions (see Proposition 2.4). The argument there relies on some information about the direction of the normal vectors of $\O$ that does not easily generalize to higher dimensions. An alternative proposed in \cite{B1999} is to use the gradient bound (\ref{grleu}). However, to generalixe (\ref{grleu}) to non-smooth domains seems to require an approximation that brings us back to the very same problem of the simplicity of the eigenvalue.

Finally we point out that all the constants involved (mostly denoted by $C$, $\d$, etc.) depend on the dimension but not on $\ep$ nor the shape of $\O$. The size of $\O(x)$, characterized by $\ep$, is assumed to be smaller than a fixed constant (which may also depend on the dimension). To avoid the proliferation of the symbol $C$, we shall use Landau's and Vinogradov's notation to write $f \ll g$, $g \gg f$, or $f = O(g)$, for $|f| \le Cg$ for some constant $C$ (that possibly depends on the dimension). If $C$ depends on another variable (say $\d$), then we write $f \ll_\d g$ or $f = O_\d(g)$.

We now state our main results as follows:

\begin{thm}\label{main}
Suppose $u$ is a Neumann eigenfunction of (\ref{PDE}) with the smallest nonzero eigenvalue $\l$; $\bar u$ is the cross-sectional average of $u$ defined by (\ref{deff}); $s_0$ is the smallest zero of $\bar u$. Suppose $\f$ is the Neumann eigenfunction of (\ref{ODE}) with the smallest nonzero eigenvalue $\mu$; $s_1$ is the unique zero of $\f$. $\f$ is normalized so that $\f(0) = \bar u(0)$. Then

(a) $\l \le \mu \le \l + O(\ep)$. (Proposition \ref{mu<l+Ce})

(b) $|\bar u(x) - \f(x)| \ll \ep \sup |u|$ for all $x \in [0, 1]$. (Proposition \ref{f-psi<Ce})

(c) The width of the projection of $\Lm$ in the $x$ direction is $\ll \ep$. (Proposition \ref{x-s1<Ce})

(d) When $n = 2$, the bound in (c) can be improved to $\ll \ep^2$. (Proposition \ref{widthep2})
\end{thm}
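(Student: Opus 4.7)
The plan is to prove the four parts of Theorem \ref{main} in the order listed, treating each as an independent proposition that feeds into the next. Throughout I rely on the gradient estimate of \cite{CJK2009}, which controls the transverse variation of $u$ across each cross-section and supplies the $\ep$-smallness in every error term.

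For part (a) I would argue by a two-sided variational comparison. The bound $\l \le \mu$ follows by lifting $\f(x)$ to a test function on $\O$ depending only on $x$: its orthogonality $\int_\O \f = \int_0^1 \f\om\,dx = 0$ matches the orthogonality defining $\mu$, and both the Dirichlet and $L^2$ integrals over $\O$ reduce to the weighted one-dimensional integrals defining $\mu$. Conversely, using $\bar u$ as a test function in the weighted one-dimensional Rayleigh quotient (admissible since $\int \bar u\,\om = \int_\O u = 0$), one bounds $\int (\bar u')^2 \om$ by $\int_\O |\gr u|^2 = \l\int_\O u^2$ with an $O(\ep)$ correction; the correction arises from Jensen losses and from the $\om'/\om$ boundary terms produced by differentiating $\bar u$ under a varying cross-section, both tamed by convexity of $\O$ and the transverse gradient estimate. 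For part (b) I would derive an ODE for $\bar u$ by integrating $\Dl u = -\l u$ over $\O(x)$ and invoking the divergence theorem with the Neumann condition: this yields $(\om \bar u')' = -\l\om \bar u + E(x)$ with $|E| \ll \ep\om\sup|u|$. Since $\f$ satisfies $(\om\f')' = -\mu\om\f$ exactly and $|\mu - \l| \ll \ep$ by (a), the difference $w = \bar u - \f$ solves a perturbed Sturm--Liouville equation with small forcing and vanishing initial datum $w(0)=0$; a Gronwall comparison on $[0,1]$, using $C^1$ control on $\f$, delivers $|w| \ll \ep\sup|u|$.

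Part (c) follows by combining (b) with the transverse variation bound. Since $\f$ has a simple zero at $s_1$ with $|\f'(s_1)|$ bounded below by a dimensional constant, (b) forces every zero of $\bar u$ into an $O(\ep)$ neighborhood of $s_1$. For a nodal point $(x,\by) \in \Lm$, the triangle inequality $|\bar u(x)| \le |u(x,\by) - \bar u(x)| \ll \ep\sup|u|$, with the right-hand side coming from the cross-sectional gradient estimate, then pins $x$ to the same $O(\ep)$ neighborhood. For part (d), specialized to $n=2$, the transverse derivative $\de_y u$ satisfies the same eigenvalue PDE as $u$; differentiating the Neumann condition on the two graph pieces $y = h_\pm(x)$ of $\de\O$ expresses its boundary values as $\pm h_\pm'(x)\de_x u$, which is $O(\ep\sup|u|)$ by convexity ($|h_\pm'|\ll\ep$) and the gradient estimate. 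A maximum-principle-style argument for $-\Dl - \l$ on the strip, legitimate because $\l$ is $O(1)$ while the strip is too narrow to support resonant transverse modes, propagates $|\de_y u| \ll \ep\sup|u|$ to the interior. Integrating $\de_y u$ in $y$ across a fiber of width $\ll \ep$ upgrades the transverse variation bound from $O(\ep)$ to $O(\ep^2)$, and plugging this improvement into the argument of (c) yields nodal width $O(\ep^2)$.

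The main obstacle is the maximum-principle step in (d): one must justify that $-\Dl - \l$ acts favorably on $\de_y u$ even though it is not a classical max-principle operator, and verify that differentiating the Neumann condition on a smooth convex boundary produces genuinely $O(\ep)$ values of $\de_y u$ on $\de \O$ rather than losing a factor in the normal-derivative decomposition. The error bookkeeping in (a), which must track how boundary variations of $\O(x)$ interact with the gradient estimate, is a secondary technical point.
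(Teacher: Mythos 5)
Your overall architecture for parts (a)--(c) matches the paper's closely: integrate the PDE over cross-sections to produce a perturbed Sturm--Liouville ODE for $\bar u$, use the convexity and gradient estimates of \cite{CJK2009} to make the perturbation $O(\ep)$, compare Rayleigh quotients for (a), Gronwall for (b), and a lower bound on $|\f'|$ near $s_1$ for (c). Two technical points in (a)--(b) are glossed over but fixable: the crucial bound on the defect $\y(x)=\om\bar u'-\int_{\O(x)}u_x$ is only of size $\ep\,\om(x)\sup|\bar u|$ on $[\ep,1-\ep]$ (the affine rescaling used to compare $\O(x)$ with $\O(x\pm h)$ degenerates near the endpoints), so the paper cuts $\bar u$ off to be constant near $\|x\|\le\ep$ and re-orthogonalizes before inserting it into the ODE Rayleigh quotient --- your sketch treats $\bar u$ itself as admissible; and in (b) the paper estimates the \emph{integrated} quantity $\y$ rather than the pointwise forcing $E=\y'$, starts the Gronwall comparison from $x=\ep$ (not $x=0$, where the equation is singular), and runs it in both directions, whereas you posit $|E|\ll\ep\om\sup|u|$ directly.

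The genuine gap is in part (d). You integrate the improved transverse bound $|\de_y u|\ll\ep\sup|u|$ across a fiber to obtain $|u(x,y_1)-u(x,y_2)|\ll\ep^2\sup|u|$, correctly, and then say ``plugging this improvement into the argument of (c) yields nodal width $O(\ep^2)$.'' That last step fails as stated. The argument of (c) compares a nodal $x$ to $s_1$, the zero of $\f$, via the chain $|\f(x)|\le|\bar u(x)|+|\bar u(x)-\f(x)|$. The second summand is controlled only by part (b), which gives $O(\ep\sup|\bar u|)$ --- not $O(\ep^2)$ --- so even with $|\bar u(x)|\ll\ep^2\sup|\bar u|$ you recover only $|\f(x)|\ll\ep\sup|\bar u|$ and hence only $|x-s_1|\ll\ep$. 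The paper circumvents this by proving a reverse gradient estimate (\ref{f'<-cf}) directly for $\bar u$ (that $-\bar u'(x)\gg_\d\sup|\bar u|$ on $\|x\|\ge\d$, using the same defect bound (\ref{y<epomf}), the integral representation (\ref{defy}), and Proposition \ref{x-s1<Ce} itself to control the contribution near the nodal region), and then compares each nodal $x$ to $s_0$, the zero of $\bar u$, never touching $\f$. You would need this additional lemma, or some equivalent route that avoids routing the $\ep^2$ information through the lossy comparison with $\f$. You also flag the maximum principle for $-\Dl-\l$ as an obstacle; the paper's resolution is a concrete barrier $w(x,y)=1+\cos(y/\ep)$, for which $(\Dl+\l)w<0$ on the narrow strip, making the generalized maximum principle of Protter--Weinberger applicable to $\de_y u/w$.
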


\begin{rem}
The first inequality in (a) is a direct consequence of the variation principle, see \cite{CJK2009}, (1.10).
\end{rem}

\begin{rem}
Although (\ref{ODE}) may be singular at the endpoints, in this case the existence of the Neumann eigenfunction $\f$ can be shown using a standard Picard iteration, see Proposition \ref{wellpose} in Appendix A.
\end{rem}

\section{Preliminary Estimates of the PDE}
In this section we record some basic estimates of $u$ from \cite{CJK2009} which we will frequently use later. We will write $\|x\|$ as a short hand for $\min(x, 1-x)$.

By using a sinusoidal test function, we know that the first Neumann eigenvalue $\l$ is bounded above by a constant. See \cite{CJK2009}, Remark 1, also see \cite{B1999}, Propsition 2.2 for a different approach from comparison theorems. A uniform lower bound of $\l$, also mentioned in that remark, can alternatively be obtained from part (a) of Theorem \ref{main}, see (\ref{eigenconst}).

Next we quote (an equivalent form of) the gradient bound proved in \cite{CJK2009}, Corollary 2.4, which will be the starting point of our estimate.

\begin{lem}
For all $(x, \by) \in \O$,
\begin{equation}\label{grleu}
|\gr u(x, \by)| \ll \max(\|x\|, \ep)\sup |u|
\end{equation}
\end{lem}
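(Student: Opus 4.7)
My plan is to combine a global Bochner subharmonic bound (handling the bulk) with a sharper endpoint analysis (giving the $\ep$-factor near $x=0,1$), and then to interpolate between the two regimes. I would proceed in three steps.

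\emph{Step 1 (Global Bochner bound).} Define $P := |\nabla u|^2 + \l u^2$. Using $\Delta u = -\l u$ and Bochner's identity, a direct calculation yields $\Delta P = 2|D^2 u|^2 \ge 0$, so $P$ is subharmonic in $\O$. The Neumann condition $u_\nu = 0$ forces $\nabla u$ to be tangential on $\partial\O$; convexity of $\O$ then gives
\[
\partial_\nu P = -2 \, II(\nabla u, \nabla u) \le 0,
\]
where $II$ denotes the second fundamental form. By the strong maximum principle together with Hopf's lemma, $\sup_\O P \le \l \sup u^2$; since $\l$ is bounded above by a constant, this yields $|\nabla u| \ll \sup|u|$ throughout $\O$, which already suffices whenever $\max(\|x\|,\ep)$ is of order $1$.

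\emph{Step 2 (Endpoint refinement).} By symmetry it suffices to treat $x=0$. For $t \in (0, c\ep]$ the cap $K_t := \O \cap \{x' \le t\}$ has diameter $O(\ep)$ by convexity. Integrating $\Delta u = -\l u$ over $K_t$ and applying the divergence theorem together with the Neumann condition on the lateral part of $\partial K_t$ gives
\[
\int_{\O(t)} \partial_x u \, d\by \;=\; -\l \int_{K_t} u \, d\bx \;=\; O(\ep^n \sup|u|),
\]
so the cross-sectional mean of $\partial_x u$ on $\O(t)$ is $O(\ep \sup|u|)$. A Poincar\'e inequality on the convex slice $\O(t)$ (with Poincar\'e constant $O(\ep)$) combined with local $C^{1,\a}$-regularity of $u$ (coming from Step 1 and boundary Schauder estimates) upgrades this averaged bound to the pointwise $|\partial_x u| \ll \ep \sup|u|$ throughout $K_{c\ep}$. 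For the tangential derivatives, use the Neumann relation $\nu_x\, \partial_x u + \nu_\by \cdot \nabla_\by u = 0$: near the endpoint $|\nu_x|$ is bounded below on a substantial portion of $\partial\O$ (a geometric consequence of convexity and the pinching), transferring the bound on $|\partial_x u|$ to $|\nabla_\by u|$ at the boundary; the maximum principle for $|\nabla_\by u|^2$ (another Bochner-type subharmonic quantity) then propagates the bound into the interior of $K_{c\ep}$.

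\emph{Step 3 (Intermediate $x$).} For $x \in [c\ep, 1/2]$, dilate $\O$ by $x^{-1}$ in all directions: on the rescaled domain $\tilde\O := x^{-1} \O$ the function $\tilde u(\tilde\bx) := u(x \tilde\bx)$ is a first Neumann eigenfunction with eigenvalue $x^2 \l$, which remains bounded. The cap analysis of Step 2 applied to $\tilde u$ near the rescaled endpoint $\tilde x = 0$, combined with rescaling back, yields $|\nabla u| \ll \|x\| \sup|u|$ at distance $\sim x$ from the endpoint. Alternatively, compare $P$ to a supersolution of the form $V(\bx) := C \|x\|^2 \sup u^2$, which interpolates smoothly between the endpoint bound of Step 2 at $\|x\| \sim \ep$ and the global bound of Step 1 at $\|x\| \sim 1/2$; the maximum principle on $P - V$ then gives the full statement.

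The main obstacle I anticipate is the pointwise upgrade in Step 2 (and in its rescaled version in Step 3). The averaged bound on $\partial_x u$ is an immediate consequence of the divergence theorem, but promoting it to an $L^\infty$ bound requires the Poincar\'e constant on the convex slices $\O(t)$ to scale like $\ep$ (which follows from convexity) together with $C^{1,\a}$-regularity of $u$ up to the smooth Neumann boundary. The transfer to the tangential derivatives via the Neumann relation is similarly delicate, depending on the geometric fact that $|\nu_x|$ is bounded below on much of $\partial K_t$ near the pinched endpoint.
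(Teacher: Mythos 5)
The paper itself does not prove this lemma; it quotes it as \cite{CJK2009}, Corollary~2.4, remarking that the weaker bound $|\nabla u|\ll\sup|u|$ is in \cite{Sperb}, Corollary~5.2 (equivalently \cite{CJK2009}, Cor.~2.3), and that the improvement to $\max(\|x\|,\ep)$ ``was derived \dots using an iteration method.'' Your Step~1 is exactly the Sperb argument (the $P$-function $|\nabla u|^2+\l u^2$ is subharmonic; convexity plus the Neumann condition force $\partial_\nu P\le 0$, so the maximum of $P$ is $\l(\sup u)^2$), so that part faithfully reproduces the cited source.

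The refinement in Steps~2--3, however, has real gaps. In Step~2 the averaged identity for $\partial_x u$ on a slice is fine, but the promotion to a pointwise bound via Poincar\'e requires controlling $\nabla\partial_x u$ on the slice by $O(\sup|u|)$; Schauder estimates up to a boundary whose transverse curvature is of order $\ep^{-1}$ only give $|D^2u|\ll\ep^{-1}\sup|u|$, which, after multiplying by the slice diameter $\ep$, yields $O(\sup|u|)$ again --- no gain. In Step~3 the dilation by $x^{-1}$ cannot help by itself: the Sperb bound $|\nabla u|^2\le\l(\sup u^2-u^2)$ is scale-invariant (both sides pick up $x^{-2}$), so applying Step~1 to $\tilde u$ and scaling back reproduces the unimproved bound rather than giving a factor $\|x\|$. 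Finally, the proposed comparison function $V(\bx)=C\|x\|^2\sup u^2$ satisfies $\Delta V=2C\sup u^2>0$, i.e.\ $V$ is \emph{sub}harmonic, so $P-V$ is not subharmonic and the stated maximum-principle comparison does not close; one would also need to check $\partial_\nu(P-V)\le0$, which fails where $\partial_\nu\|x\|^2<0$. The missing mechanism is the bootstrap alluded to in the paper: one feeds $\sup u - u(\bx)\ll(\|x\|+\ep)^{\a}\sup u$ (initially $\a=0$, from the endpoint concentration estimate (\ref{supfgesupu}) and the crude gradient bound) into $|\nabla u|^2\le\l\,(\sup u-u)(\sup u+u)$, integrates along a short path from the extremum to $\bx$ to improve $\a\mapsto(1+\a)/2$, and iterates to $\a=1$. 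Without that iteration (or an equivalent device), the proposed argument does not establish the stated $\max(\|x\|,\ep)$ factor.
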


This result was derived from a weaker bound $|\gr u(x, \by)| \ll \sup |u|$ using an iteration method. The citation of that result, however, is likely to confuse some readers. Here we give a more direct source: \cite{Sperb}, Corollary 5.2, which can be used to deduce \cite{CJK2009}, Corollary 2.3. Taking the geometry of $\O$ into account, we have a more convenient form of (\ref{grleu}).

\begin{cor}
for every pair of points $(x_1, \by_1)$ and $(x_2, \by_2) \in \O$,
\begin{equation}\label{varulexCu}
|u(x_1, \by_1) - u(x_2, \by_2)| \ll (|x_1 - x_2| + \ep) \sup |u|
\end{equation}
\end{cor}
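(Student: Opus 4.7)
The plan is to connect $(x_1, \by_1)$ and $(x_2, \by_2)$ by a straight line segment inside $\O$ and integrate the gradient bound \eqref{grleu} along it.

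First, I would invoke convexity: the segment $\g(t) = (1-t)(x_1, \by_1) + t(x_2, \by_2)$ lies entirely in $\conj\O$ for $t \in [0, 1]$, and $u$ extends smoothly to the closure, so the fundamental theorem of calculus applies along $\g$:
\[
u(x_2, \by_2) - u(x_1, \by_1) = \int_0^1 \gr u(\g(t)) \cdot \bigl((x_2, \by_2) - (x_1, \by_1)\bigr)\, dt.
\]

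Next, I would estimate the displacement using the geometric hypothesis $\O(s) \sub B^{n-1}(\ep)$ for every admissible $s$. This gives $|\by_i| < \ep$ for $i = 1, 2$, hence $|\by_1 - \by_2| < 2\ep$, and the triangle inequality yields
\[
|(x_2, \by_2) - (x_1, \by_1)| \le |x_1 - x_2| + |\by_1 - \by_2| \le |x_1 - x_2| + 2\ep.
\]

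Finally, I would insert \eqref{grleu}. Because $\|x\| \le 1/2$ on $[0, 1]$ and $\ep$ is taken below a fixed constant, the factor $\max(\|x\|, \ep)$ is uniformly $O(1)$, so $|\gr u| \ll \sup |u|$ pointwise on $\O$. Combining this with the length bound above,
\[
|u(x_1, \by_1) - u(x_2, \by_2)| \le (|x_1 - x_2| + 2\ep)\, \sup_{\O} |\gr u| \ll (|x_1 - x_2| + \ep)\sup |u|,
\]
which is the claim. I do not expect a serious obstacle: the only substantive checks are (i) convexity of $\O$, which justifies using the straight-line path, and (ii) the cross-sectional containment in $B^{n-1}(\ep)$, which supplies the $2\ep$ bound on $|\by_1 - \by_2|$. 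Notably, the sharper decay in \eqref{grleu} near the endpoints is not needed for this corollary; the uniform bound $|\gr u| \ll \sup|u|$ already suffices.
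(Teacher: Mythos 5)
Your proof is correct, and it is essentially the only natural way to deduce the corollary: since the paper states this corollary without an explicit argument (only the remark ``taking the geometry of $\O$ into account''), the intended derivation is exactly the one you give — join the two points by a segment in $\conj\O$ (convexity), bound its length by $|x_1-x_2|+2\ep$ (cross-sections lie in $B^{n-1}(\ep)$), and integrate the gradient bound, for which the uniform estimate $|\gr u|\ll\sup|u|$ already suffices.
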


Since $\bar u(x)$ is the cross-sectional average of $u$, the $u(x_i, \by_i)$ in (\ref{varulexCu}) can be replaced by $\bar u(x_i)$.

The following inequality (\cite{CJK2009}, Theorem 2.2) is also useful. \begin{lem} 
\begin{equation}\label{infule-csupu}
\sup u \ll -\inf u \ll \sup u
\end{equation}
\end{lem}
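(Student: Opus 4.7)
Since $u$ is a first nontrivial Neumann eigenfunction, it is $L^2$-orthogonal to the constants, so $\int_\O u\,d\bx = 0$. Consequently $u$ takes both signs and both $M := \sup u$ and $m := -\inf u$ are strictly positive. The assertion $\sup u \ll -\inf u \ll \sup u$ amounts to $M \asymp m$; the problem is invariant under $u \mapsto -u$ (which swaps $M$ and $m$), so it suffices to prove one direction, say $m \gg M$, under the normalization $\sup|u| = M = 1$.

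The strategy is to transfer the question from $u$ to its cross-sectional profile $\bar u$ and then invoke the zero-mean condition. Applying (\ref{varulexCu}) to two points in the same cross section gives $|u(x,\by) - \bar u(x)| \ll \ep$ for every $(x,\by) \in \O$. Consequently $\sup_x \bar u \geq 1 - O(\ep) \geq 1/2$ while $\inf_x \bar u \geq -m - O(\ep)$, and the identity $\int_\O u\,d\bx = 0$ becomes $\int_0^1 \bar u(x)\om(x)\,dx = 0$. Splitting into positive and negative parts yields
\begin{equation*}
\int_0^1 \bar u^+(x)\,\om(x)\,dx \;=\; \int_0^1 \bar u^-(x)\,\om(x)\,dx \;\leq\; (m + O(\ep))\int_0^1 \om(x)\,dx.
\end{equation*}
Pick $x^*$ with $\bar u(x^*) \geq 1/2$; the Lipschitz bound on $\bar u$ (inherited from (\ref{varulexCu}) as the paper already notes) gives an interval $I \ni x^*$ of length $\gg 1$ on which $\bar u \geq 1/4$, so the left-hand side is bounded below by $\tfrac{1}{4}\int_I \om$. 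Brunn-Minkowski, which makes $\om^{1/(n-1)}$ concave on its support $[0,1]$, then provides the key comparison $\int_I \om \gg \int_0^1 \om$ with a dimensional constant, and combining everything gives $m + O(\ep) \gg 1$, whence $m \gg 1$ once $\ep$ is small.

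\textbf{Main obstacle.} The delicate step is the final comparison $\int_I \om \gg \int_0^1 \om$: although $|I| \gg 1$, a priori $\om$ might concentrate outside $I$, and the only thing forbidding this is the Brunn-Minkowski concavity of $\om^{1/(n-1)}$, which forces $\om$ to decay no faster than polynomially away from any point where it is substantial. Particular care is needed when $x^*$ lies near an endpoint of $[0,1]$, where $\om$ may degenerate to zero; nonetheless the concavity remains strong enough to yield the desired uniform ratio by an elementary one-dimensional calculus estimate, independent of the shape of $\O$.
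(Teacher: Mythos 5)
The paper does not supply its own proof of this lemma; it is quoted verbatim from \cite{CJK2009}, Theorem 2.2. Your argument is therefore a genuine alternative, and it is correct. You rely on three inputs, all available in the paper without circularity: (i) the transverse oscillation bound $|u(x,\by_1)-u(x,\by_2)|\ll\ep\sup|u|$ together with the Lipschitz bound on $\bar u$, both of which need only the weak gradient estimate $|\nabla u|\ll\sup|u|$ (the paper notes this comes from Sperb independently of the iterated bound (\ref{grleu})); (ii) orthogonality to constants, i.e.\ $\int_0^1\om\bar u\,dx=0$; and (iii) the mass comparison $\int_I\om\gg\int_0^1\om$ for any interval $I\subset[0,1]$ of length $\gg 1$. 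For (iii) you invoke Brunn--Minkowski concavity of $\om^{1/(n-1)}$, but the paper's own Lemma~\ref{omes}(b), i.e.\ (\ref{om01}), already gives exactly this comparison from convexity alone via a cone construction, so you could simply cite it rather than reprove it; this also immediately handles your endpoint worry about $x^*$, since for $|I|\gg 1$ one can always place a subinterval $[a,b]\subset I$ with $a$, $1-b$, and $b-a$ all bounded below by dimensional constants. One minor point to tidy up: the step $\sup_x\bar u\ge 1-O(\ep)$ implicitly evaluates $\bar u$ at the $x$-coordinate where $u$ attains its maximum; if that lies on the endpoint cross-section where $\om$ could vanish, $\bar u$ is only defined as a one-sided limit there (as in (\ref{supfgesupu})), so you should either take a nearby interior $x$ or phrase it as $\bar u(0+)$. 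With that caveat the structure --- lower-bound $\int\bar u^+\om$ by a fixed fraction of $\int_0^1\om$, upper-bound $\int\bar u^-\om$ by $m\int_0^1\om$, conclude $m\gg 1$, and finish by the $u\mapsto -u$ symmetry --- is sound.
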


In particular, it allows us to replace the $\sup |u|$ in (\ref{grleu}) and (\ref{varulexCu}) by $\sup u$ or $-\inf u$, with some variation of the corresponding constants.

Suppose $(x, \by) \to (x_0, \by_0)$ when $u \to \sup u$ and $(x, \by) \to (x_1, \by_1)$ when $u \to \inf u$. Then by (\ref{varulexCu}) and (\ref{infule-csupu}), we know that $x_0 \neq x_1$. Without loss of generality we suppose $x_0 < x_1$. Then the following lemma allows us to replace $\sup|u|$ with $\sup |\bar u|$ at the cost of a constant factor.

\begin{lem}(\cite{CJK2009}, (2.6))
\begin{equation}\label{supfgesupu}
\bar u(0+) = u(0+, \by) \ge (1 - O(\ep)) \sup u,\ \bar u(1-) = u(1-, \by) \le (1 - O(\ep)) \inf u
\end{equation}
\end{lem}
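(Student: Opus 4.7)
The lemma has two pieces. First, $\bar u(0+) = u(0+,\by)$ up to an $O(\ep\sup|u|)$ error, which follows from (\ref{varulexCu}): the transverse variation of $u$ on any cross-section is $O(\ep\sup|u|)$, and this is absorbed into the $(1-O(\ep))$ factor. Second, and the heart of the matter, this common value is within $O(\ep\sup|u|)$ of $\sup u$. I focus on the second; the bound at $x=1-$ follows by the mirror-image argument (reflect via $x\mapsto 1-x$ and replace $u$ by $-u$, noting that the reflected domain satisfies the same hypotheses).

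My approach is to derive an approximate first-order ODE for $\bar u$. Integrating $\Dl u = -\l u$ over $\O\cap\{x<s\}$ and using the Neumann condition to discard the side-wall part of the boundary yields
\begin{equation*}
\int_{\O(s)} u_x\,d\by \;=\; -\l\int_0^s \om(t)\bar u(t)\,dt \;=:\; -\l G(s).
\end{equation*}
A moving-domain differentiation of $\om(s)\bar u(s) = \int_{\O(s)} u\,d\by$ identifies the left-hand side with $\om(s)\bar u'(s) - E(s)$, where $E(s)$ is a boundary error along $\de\O(s)$ with integrand $(u-\bar u)$ weighted by the normal velocity of the cross-section; by (\ref{grleu}) the factor $u-\bar u$ is $O(\ep\sup|u|)$ pointwise, and convexity of $\O$ (via Brunn--Minkowski concavity of $\om^{1/(n-1)}$) keeps the boundary geometry of $\de\O(s)$ under control. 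Meanwhile, Courant's nodal-domain theorem, applied to the first nontrivial Neumann eigenfunction $u$, together with $G(0)=G(1)=0$, forces $\bar u$ to change sign exactly once on $(0,1)$; hence $G\ge 0$ throughout. So $\om\bar u' = -\l G + E$ with $-\l G\le 0$, and $\bar u$ is approximately decreasing.

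Integrating this approximate monotonicity from $0$ to the $x$-coordinate $x_0$ of $\sup u$ gives $\bar u(x_0)-\bar u(0+)\ll\int_0^{x_0}|E|/\om$. Combined with the direct bound $\bar u(x_0)\ge\sup u - O(\ep\sup|u|)$ from (\ref{varulexCu}) applied at $(x_0,\by_0)$, this yields the claim provided $\int_0^{x_0}|E|/\om \ll \ep\sup|u|$.

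The last proviso is the main obstacle: $\om$ vanishes at $x=0$ (at a rate like $x^{n-1}$ by Brunn--Minkowski), so dividing by $\om$ is delicate. The natural remedy is to quarantine the left endpoint. On the slab $[0,\ep]$ the endpoint refinement of (\ref{grleu}) gives $|\gr u|\ll\ep\sup|u|$ automatically, so $u$ varies by only $O(\ep^2\sup|u|)$ there and $\bar u(0+)$ and $\bar u(\ep)$ already differ by at most that amount. The ODE analysis is then applied only on $[\ep,x_0]\subset[\ep,1-\ep]$, where Brunn--Minkowski provides a uniform lower bound on $\om$, and one pushes the cancellation in $E$ (the fact that $u-\bar u$ has mean zero on $\O(s)$) so as to bound $|E|/\om$ by the oscillation of the normal velocity rather than by the full perimeter. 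Carrying this cancellation through while matching it to the thin-tube geometry is where the real technical work lies.
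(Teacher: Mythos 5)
The paper does not prove this lemma: it quotes it from \cite{CJK2009}, equation (2.6), so there is no internal proof to compare against. Judged on its own, your sketch has a genuine gap at the step where you assert that Courant's nodal-domain theorem, together with $G(0)=G(1)=0$, \emph{forces} $\bar u$ to change sign exactly once on $(0,1)$, whence $G\ge 0$. That is not a valid deduction. Courant's theorem says that the $n$-dimensional function $u$ has exactly two nodal domains $\O_+,\O_-\subset\O$; it says nothing about how the one-dimensional average $\bar u$ changes sign. A priori the nodal hypersurface of $u$ could fold so that some interior slab $\{a<x<b\}$ meets $\O_-$ more heavily than $\O_+$ while the slabs on either side lean the other way, producing multiple sign changes of $\bar u$; Courant plus $\int_0^1\om\bar u=0$ do not exclude this. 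Excluding it is precisely the kind of localization-of-the-nodal-set statement the rest of the paper is devoted to proving, with (\ref{supfgesupu}) as an \emph{input} (through (\ref{|s0|>d})), so you are uncomfortably close to assuming what is to be shown. Since $G\ge 0$ is exactly what makes $-\l G$ a one-signed drift in $\om\bar u'=E-\l G$, this gap breaks the ``approximate monotonicity'' on which the entire integration from $0$ to $x_0$ rests.

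Two secondary remarks. The bound $|E(x)|\ll\ep\,\om(x)\sup|u|$ on $[\ep,1-\ep]$, which you describe as ``where the real technical work lies,'' is exactly the content of the paper's lemma giving (\ref{y<epomf}); its proof there goes through verbatim with $\sup|u|$ in place of $\sup|\bar u|$, which is the form you must use here, since the version literally written in the paper already invokes (\ref{supfgesupu}) through $\sup|u|\ll\sup|\bar u|$ in (\ref{deyu}) and (\ref{intuTinv2}). So that ingredient is fillable, but it must be invoked in its $\sup|u|$ form to keep the argument non-circular. Also, $\bar u(0+)=u(0+,\by)$ is an exact equality, not a statement ``up to $O(\ep\sup|u|)$'': the cross-sections $\O(x)$ shrink to a single point as $x\to 0^+$ by convexity and boundedness, so the average converges to the pointwise limit. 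This is minor, but it is what allows the lemma to be stated with an equality rather than an approximation at the endpoints.
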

Combining (\ref{varulexCu}) and (\ref{supfgesupu}), we get
\begin{cor}
There is a constant $\d_0$ such that:
\begin{equation}\label{|s0|>d}
\bar u(x) \ge \sup u/2,\, \all x \le \d_0,\ \bar u(x) \le \inf u/2,\, \all x \ge 1 - \d_0
\end{equation}
Therefore any zero of $\bar u$ is at least $\d_0$ away from 0 and 1.
\end{cor}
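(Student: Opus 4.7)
The plan is to combine the boundary estimate (\ref{supfgesupu}) with the Lipschitz-type bound (\ref{varulexCu}) for the cross-sectional average, and then choose $\d_0$ small enough.

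First I would apply (\ref{varulexCu}) to $\bar u$. As remarked in the text immediately after (\ref{varulexCu}), one may take $u(x_i,\by_i)$ on the left-hand side to be $\bar u(x_i)$, since $\bar u(x_i)$ is the cross-sectional average of values that all satisfy the same bound. Thus, for every $x \in [0,1]$,
\begin{equation*}
|\bar u(x) - \bar u(0+)| \ll (x + \ep)\sup|u|.
\end{equation*}
Together with (\ref{supfgesupu}), which gives $\bar u(0+) \ge (1-O(\ep))\sup u$, and (\ref{infule-csupu}), which gives $\sup|u| \ll \sup u$, the triangle inequality yields
\begin{equation*}
\bar u(x) \ge (1 - C\ep - C'(x+\ep))\sup u
\end{equation*}
for some constants $C, C'$ depending only on the dimension.

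Next I would choose $\d_0$. Because $\ep$ is assumed smaller than a fixed (dimension-dependent) constant, and because $C, C'$ do not depend on $\ep$, I can pick $\d_0 > 0$ so small that $C\ep + C'(\d_0 + \ep) \le 1/2$ whenever $x \le \d_0$. This immediately gives $\bar u(x) \ge \sup u/2$ for $x \le \d_0$. The symmetric argument near $x = 1$, using the second half of (\ref{supfgesupu}) and the fact that $\inf u < 0$ by (\ref{infule-csupu}), produces $\bar u(x) \le \inf u / 2$ for $x \ge 1 - \d_0$, possibly after shrinking $\d_0$ further.

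Finally, since $u$ is a non-constant eigenfunction with nonzero eigenvalue, it is orthogonal to constants, so $\sup u > 0 > \inf u$; hence $\sup u/2 > 0$ and $\inf u/2 < 0$, and neither of the two intervals $[0,\d_0]$, $[1-\d_0,1]$ can contain a zero of $\bar u$. The argument is entirely a triangle inequality combined with the two already-established facts (\ref{varulexCu}) and (\ref{supfgesupu}); no serious obstacle arises. The only point requiring a little care is tracking that the implicit constants in (\ref{varulexCu}) and (\ref{infule-csupu}) are $\ep$-independent, so that the choice of $\d_0$ can be made uniformly.
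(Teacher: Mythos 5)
Your proposal is correct and is exactly the argument the paper has in mind: the paper introduces the corollary with the words ``Combining (\ref{varulexCu}) and (\ref{supfgesupu}), we get,'' and you have supplied precisely that combination together with the routine choice of $\d_0$. Nothing further to add.
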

%In analogy with \cite{B1999}, Proposition 2.4, we have
%\begin{cor}\label{simple}
%The Neumann eigenfunction in the smooth convex domain $\O$ with eigenvalue $\l$ is simple.
%\end{cor}
%\begin{proof}
%Suppose $u_1$ and $u_2$ are two linearly independent Neumann eigenfunction with eigenvalue $\l$, then for any $\bx_0 \in \O$, $u_1(\bx_0) u_2(\bx) - u_2(\bx_0) u_1(\bx)$ is a non-zero Neumann eigenfunction with the same eigenvalue that vanishes at $\bx_0$, contradicting (\ref{|s0|>d}).
%\end{proof}
%

\section{Estimates of the Domain $\O$}
This section collects the necessary estimates of the cross-sectional volume $\om$. In a sentence they say that $\om$ can not change much due to the convexity of $\O$.
\begin{lem}\label{omes}
Suppose $0 < a < b < 1$. Then 

(a)
\begin{equation}\label{omaomb}
\om(b) \le \om(a) \left( \frac{b}{a} \right)^{n-1},\, \om(a) \le \om(b) \left( \frac{1-a}{1-b} \right)^{n-1}
\end{equation}

(b)
\begin{align}
\label{omb1} \int_b^1 \om dx &< \frac{1-b}{b-a} \left( \rc{a} \right)^{n-1} \int_a^b \om dx,\ \int_0^a \om dx &< \frac{a}{b-a} \left( \rc{1-b} \right)^{n-1} \int_a^b \om dx
\end{align}
Therefore
\begin{equation}
\label{om01} \int_0^1 \om dx < C(a,b)\int_a^b \om dx
\end{equation}
where
\[
C(a, b) = \rc{b-a} \left[ \left( \rc{a} \right)^{n-1} (1-b) + \left( \rc{1-b} \right)^{n-1} a \right] + 1
\]

(c)
\begin{equation}
\label{omend} \int_0^\ep \om dx + \int_{1-\ep}^1 \om dx \ll \ep \int_\ep^{1-\ep} \om dx
\end{equation}

(d)
\begin{equation}
\label{sup<L2} \sup|\bar u|^2 \int_0^1 \om dx \ll \int_\ep^{1-\ep} \om \bar u^2dx
\end{equation}
\end{lem}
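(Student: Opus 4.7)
The overall plan is to exploit the convexity of $\O$ via Brunn's theorem, which tells us that $f(x) := \om(x)^{1/(n-1)}$ is a non-negative concave function on $[0,1]$. From $f$ concave with $f(0),f(1)\ge 0$, standard chord-extrapolation (considering the line through $(0,f(0))$ and $(a,f(a))$ evaluated at $b$, and symmetrically through $(b,f(b))$ and $(1,f(1))$ evaluated at $a$) gives $f(b)\le (b/a)f(a)$ and $f(a)\le ((1-a)/(1-b))f(b)$ for $0<a<b<1$; raising to the $(n-1)$st power yields both inequalities of part (a).

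For part (b), fix $y\in[a,b]$ and $x\in[b,1]$ and apply (a) to the pair $y<x$ to get $\om(x)\le (x/y)^{n-1}\om(y)\le(1/a)^{n-1}\om(y)$. Integrating in $x$ over $[b,1]$ and then averaging in $y$ over $[a,b]$ (picking up the factor $1/(b-a)$) produces the first half of (\ref{omb1}); the second half is symmetric, using the other inequality in (a). The bound (\ref{om01}) then follows by splitting $\int_0^1\om = \int_0^a\om+\int_a^b\om+\int_b^1\om$ and plugging in.

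Part (c) is not a direct consequence of (b), since $C(\ep,1-\ep)$ blows up like $\ep^{-(n-1)}$. Instead I would use (a) with $b=\ep$ to bound $\om(x)\le\om(\ep)((1-x)/(1-\ep))^{n-1}\ll\om(\ep)$ on $[0,\ep]$, giving $\int_0^\ep\om\ll\ep\,\om(\ep)$. A second application of (a) (with $a=\ep,b=x$) shows $\om(x)\gg\om(\ep)$ uniformly on the fixed interval $[1/4,3/4]\sub[\ep,1-\ep]$ once $\ep$ is small, so $\int_\ep^{1-\ep}\om\gg\om(\ep)$. Combining these yields $\int_0^\ep\om\ll\ep\int_\ep^{1-\ep}\om$, and the right endpoint is symmetric.

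For part (d), (\ref{supfgesupu}) together with (\ref{infule-csupu}) gives $\sup|\bar u|\asymp\sup|u|$, and (\ref{|s0|>d}) gives $|\bar u(x)|\gtrsim\sup|\bar u|$ for $x\le\d_0$, where $\d_0$ is a dimensional constant. Hence for $\ep<\d_0$,
\[
\int_\ep^{1-\ep}\om\bar u^2\,dx \ge \int_\ep^{\d_0}\om\bar u^2\,dx \gg \sup|\bar u|^2\int_\ep^{\d_0}\om\,dx.
\]
It remains to show $\int_0^1\om\ll\int_\ep^{\d_0}\om$. The main obstacle throughout parts (c) and (d) is keeping constants independent of $\ep$: naively plugging $\ep$-level endpoints into (\ref{om01}) blows the constant up by a factor $\ep^{-(n-1)}$. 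The fix is to always apply (\ref{om01}) with a sub-interval of length bounded below by a dimensional constant; here I would apply it with $[\d_0/2,\d_0]\sub[\ep,\d_0]$ (valid once $\ep<\d_0/2$), producing an $\ep$-independent constant depending only on $n$ and completing the estimate.
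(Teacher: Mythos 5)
Your proposal is correct, and parts (a), (b), and (d) follow essentially the same line of reasoning as the paper (the paper's (b) picks the minimizer of $\om$ on $[a,b]$ rather than averaging in $y$, but the algebra is identical, and its (d) uses the same interval $[\d_0/2,\d_0]$ you do). The one genuine divergence is part (c): the paper proves (\ref{omend}) by reusing (\ref{omb1}) with the fixed left endpoint $a=1/2$ and $b=1-\ep$, which already yields the constant $\tfrac{2^{n-1}\ep}{1/2-\ep}\ll\ep$ with no blow-up (so, contrary to your parenthetical, it \emph{is} a direct consequence of (b) -- the bad $\ep^{-(n-1)}$ scaling only appears if one carelessly takes $a=\ep$). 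Your alternative route -- bounding $\int_0^\ep\om\ll\ep\,\om(\ep)$ and $\int_\ep^{1-\ep}\om\gg\om(\ep)$ via two applications of (a) -- is equally valid and perhaps slightly more self-contained, since it bypasses the integrated statement (b) entirely; the paper's choice is marginally tighter since it reuses existing machinery.
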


\begin{proof}

(a) This follows from the convexity of $\O$.

(b) Let $\om_m = \min \{\om(x): a \le x \le b\} = \om(x_0)$ for some $x_0 \in [a,b]$. Then by (\ref{omaomb}),
\begin{align*}
\int_b^1 \om dx &\le \om(x_0) \int_b^1 \left( \frac{x}{x_0} \right)^{n-1}dx < \om_m (1-b) \left( \rc{a} \right)^{n-1}\\
&\le \rc{b-a} \left( \rc{a} \right)^{n-1} (1-b) \int_a^b \om dx
\end{align*}
and this proves (\ref{omb1}). The other half follows symmetrically.

(c) Taking $a = 1/2$ and $b = 1 - \ep$ in (\ref{omb1}) we have
\[
\int_{1-\ep}^1 \om dx < \frac{2^{n-1}\ep}{1/2-\ep} \int_{1/2}^{1-\ep} \om dx < 2^{n+1}\ep \int_{1/2}^{1-\ep} \om dx
\]
Adding the symmetric estimate gives (\ref{omend}).

(d) Take $\d_0$ as in (\ref{|s0|>d}). By (\ref{om01}),
\[
\sup |\bar u|^2 \int_0^1 \om dx \ll \sup |\bar u|^2 \int_{\d_0/2}^{\d_0} \om dx \ll \int_{\d_0/2}^{\d_0} \om \bar u^2dx \ll \int_\ep^{1-\ep} \om \bar u^2dx
\]
\end{proof}

\section{Energy Estimates of the Cross-sectional Average $\bar u$}
Now we are prepared to derive the ODE satisfied by $\bar u$ and compare it to (\ref{ODE}). To do so, we apply the divergence theorem to the region $\{s \le x\}$.
\begin{align}
\nonumber \int_{\O(x)} u_x(x,\by)d\by &= \int_{\de\{s \le x\}} u_\nu(s,\by)d\by = \int_{s \le x} \Dl u(s,\by)d\by ds\\
\label{intomux} &= -\l \int_{s \le x} u(s,\by)d\by ds = -\l \int_0^x \om(s)\bar u(s)ds
\end{align}

We define
\begin{equation}\label{defy}
\y(x) = \om(x)\bar u'(x) - \int_{\O(x)} u_x(x,\by)d\by = \om(x)\bar u'(x) + \l \int_0^x \om(s)\bar u(s)ds
\end{equation}
Since $u(x, \by)$ and $\de \O$ are smooth, $\bar u(x)$, hence $\y(x)$, is also smooth, which allows us to differentiate (\ref{defy}) with respect to $x$ to obtain an ODE satisfied by $\bar u$:
\begin{equation}\label{ODEy}
(\om\bar u')' = \y' - \l\om\bar u
\end{equation}
Now we compute the energy of $\bar u$ in terms of its $L^2$ norm (weighted by $\om$) and some error terms.
\begin{align}
\nonumber \int_\ep^{1-\ep} \om\bar u'^2dx &= \bar u\om\bar u'|_\ep^{1-\ep} - \int_\ep^{1-\ep} \bar u(\y'-\l\om\bar u)dx\\
\nonumber &= \bar u(\om\bar u'-\y)|_\ep^{1-\ep} + \int_\ep^{1-\ep} (\bar u'\y+\l\om\bar u^2)dx\\
\nonumber &= \bar u(1-\ep) \int_{\O(1-\ep)} u_x(1-\ep,\by)d\by - \bar u(\ep) \int_{\O(\ep)} u_x(\ep,\by)d\by\\
\label{intomf'2} &+ \int_\ep^{1-\ep} (\bar u'\y+\l\om\bar u^2)dx
\end{align}

The control of the first two terms is easy. By (\ref{intomux}), the uniform bound of $\l$, (\ref{omend}) and (\ref{sup<L2}),
\begin{align}\label{bdterml}
\left | \bar u(\ep)\int_{\O(\ep)} u_x(\ep,\by)d\by\right | &\ll \sup|\bar u|^2\int_0^\ep \om dx \ll \ep\int_\ep^{1-\ep} \om\bar u^2dx
\end{align}
Symmetrically,
\begin{align}\label{bdtermr}
\left | \bar u(1-\ep)\int_{\O(1-\ep)} u_x(1-\ep,\by)d\by\right | \ll \ep\int_\ep^{1-\ep}  \om\bar u^2dx
\end{align}

The bound on the third term in (\ref{intomf'2}) is trickier. First we pick a point $(0, \by_0) \in \de \O \cap \{x = 0\}$ and consider an affine map
\[
T_x: (x, \by) \in \O(x) \to \left( \ep, \by_0 + \frac{\ep}{x} (\by - \by_0) \right)
\]

For $x \in [\ep,1-\ep]$, let $\O_\ep(x) = T_x\O(x)$, and $\om_\ep(x) = |\O_\ep(x)|_{\R^{n-1}}$. Since $T_x$ is affine,
\begin{equation}\label{omep}
\om_\ep(x) = |\det T_x|\om(x) = \left( \frac{\ep}{x} \right)^{n-1} \om(x)
\end{equation}

We now obtain some information on $\O_\ep$ and $\om_\ep$.
\begin{lem}\label{omepdec}

\indent (a) As a set, $\O_\ep(x)$ is monotonely decreasing in $x$.

\indent (b)
\begin{equation}\label{ome'}
-2(n-1) \frac{\om_\ep(x)}{\|x\|} \le \om_\ep'(x) \le 0
\end{equation}
\end{lem}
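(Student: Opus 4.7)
The plan is to deduce (a) from the convexity of $\O$ combined with the chosen boundary point $(0, \by_0)$, and then to derive (b) by combining (a) with the explicit formula (\ref{omep}) and the convexity bounds (\ref{omaomb}).

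For part (a), fix $\ep \le x_1 < x_2 \le 1-\ep$ and take any $(x_2, \by) \in \O(x_2)$. Since $(0,\by_0) \in \conj{\O}$, $(x_2,\by) \in \O$, and $\O$ is open and convex, every interior point of the segment joining them lies in $\O$; at parameter $t = x_1/x_2 \in (0,1)$ this gives the point $(x_1, \by_0 + (x_1/x_2)(\by-\by_0)) \in \O(x_1)$. Applying $T_{x_1}$ to this point produces
\[
(\ep, \by_0 + (\ep/x_2)(\by-\by_0)) = T_{x_2}(x_2,\by),
\]
so $\O_\ep(x_2) \subeq \O_\ep(x_1)$.

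For part (b), the upper bound $\om_\ep'(x) \le 0$ is immediate from (a), since $\om_\ep(x)$ is the volume of a decreasing family of sets. For the lower bound I use (\ref{omep}). Rearranging the second half of (\ref{omaomb}) with $a = x$ and $b = x+h$ (for $h > 0$ small) gives $\om(x+h) \ge \om(x)\left( \frac{1-x-h}{1-x} \right)^{n-1}$, hence
\[
\frac{\om_\ep(x+h)}{\om_\ep(x)} \ge \left( \frac{x(1-x-h)}{(x+h)(1-x)} \right)^{n-1}.
\]
Subtracting $1$, dividing by $h$, and letting $h \to 0^+$ yields
\[
\om_\ep'(x) \ge -(n-1)\left( \rc{x} + \rc{1-x} \right)\om_\ep(x) = -\frac{(n-1)\om_\ep(x)}{x(1-x)}.
\]
Combined with the elementary bound $x(1-x) \ge \|x\|/2$ on $[0,1]$, this gives $\om_\ep'(x) \ge -2(n-1)\om_\ep(x)/\|x\|$, as required.

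I do not foresee a serious obstacle: both parts follow from elementary convexity and calculus. The only technical subtlety is verifying that the interpolated point in (a) lies in the \emph{open} cross-section $\O(x_1)$ rather than merely in its closure; this is handled by the standard fact that the relative interior of a segment from a boundary point to an interior point of a convex set is contained in the interior.
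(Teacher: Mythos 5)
Your proof is correct, and both parts do what the paper does at the level of ideas. For part (a) the paper simply asserts that it ``follows from the convexity of $\O$''; your explicit argument (interpolating along the segment from $(0,\by_0)$ to $(x_2,\by)$ and applying $T_{x_1}$) is the intended one, and the remark about the relative interior of a segment from a boundary point to an interior point lying in the interior is exactly the right way to land in the open set $\O(x_1)$.

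For the lower bound in (b), the paper and you arrive at the same intermediate inequality
\[
\frac{\om_\ep'(x)}{\om_\ep(x)} \;\ge\; -(n-1)\left(\rc{x}+\rc{1-x}\right) \;=\; -\frac{n-1}{x(1-x)},
\]
but by slightly different bookkeeping. The paper invokes Brunn--Minkowski directly to get that $\om^{1/(n-1)}$ is concave, deduces $(\log\om)' \ge -(n-1)/(1-x)$, and then differentiates $\log\om_\ep(x) = (n-1)(\log\ep - \log x) + \log\om(x)$. You instead take (\ref{omaomb}) as given, form the finite-difference ratio $\om_\ep(x+h)/\om_\ep(x)$ using (\ref{omep}), and let $h\to 0^+$. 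Since (\ref{omaomb}) is itself a consequence of the same convexity, the two arguments are really the same estimate packaged differently; yours is a bit more self-contained (it only uses facts already recorded in the paper), while the paper's is a bit more direct. Both then close with $x(1-x)\ge\|x\|/2$. No gaps.
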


\begin{proof}
(a) and the second inequality of (b) follows from the convexity of $\O$. Now we show the first inequality. By Brunn-Minkowski Inequality $\om(x)^{1/(n-1)}$ is concave. Therefore,
\begin{align*}
-\frac{\om(x)^{1/(n-1)}}{1-x} &\le \frac{\om(1)^{1/(n-1)} - \om(x)^{1/(n-1)}}{1-x} \le (\om(x)^{1/(n-1)})'\\
&= \frac{\om'(x)}{(n-1)\om(x)^{(n-2)/(n-1)}}
\end{align*}
Hence
\[
-\frac{n-1}{1-x} \le \frac{\om'(x)}{\om(x)} = (\log\om(x))'
\]
From (\ref{omep}) we know $\log\om_\ep(x) = (n-1)(\log \ep - \log x + \log \om(x))$. Then
\[
\frac{\om_\ep'(x)}{\om_\ep(x)} = (\log \om_\ep(x))' = -\frac{n-1}{x} + (\log \om(x))' \ge -2\frac{n-1}{\|x\|}
\]
\end{proof}

Our crucial estimate comes from:
\begin{lem}
For $x \in [\ep,1-\ep]$,
\begin{equation}\label{y<epomf}
|\y(x)| \ll \ep\om(x)\sup|\f|
\end{equation}
\end{lem}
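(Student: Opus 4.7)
The strategy is to express $\y(x)$ as a boundary integral over $\de\O(x)$ via Reynolds' transport theorem, and then to tame the normal velocity $v$ of the moving boundary by splitting it using the scaling $T_x$ from Section 3. Reynolds applied to $F(x) = \int_{\O(x)} u\, d\by = \om(x)\bar u(x)$ gives $F'(x) = \int_{\O(x)} u_x\, d\by + \int_{\de\O(x)} u v\, dS$; combined with $\om'(x) = \int_{\de\O(x)} v\, dS$ (Reynolds with $u \equiv 1$), this yields $\y(x) = \int_{\de\O(x)} (u - \bar u(x)) v\, dS$. Direct differentiation of $\mathbf z(x') = T_{x'}\by(x') = \by_0 + (\ep/x')(\by(x') - \by_0)$ at $x' = x$ along a moving boundary point, using that the homothety $T_x$ preserves the unit normal, gives $v_\ep = -(\ep/x^2)w + (\ep/x)v$ with $w := \hat n \cdot (\by - \by_0)$ and $v_\ep$ the outward normal velocity of $\de\O_\ep(x)$, hence the decomposition $v = w/x + (x/\ep)v_\ep$ and the split $\y = \y_1 + \y_2$.

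For the radial piece $\y_1 = \rc{x}\int_{\de\O(x)}(u - \bar u)w\,dS$, the divergence theorem in $\by$ applied to the vector field $\by - \by_0$ (so $(\by - \by_0)\cdot\hat n = w$) yields $\y_1 = \rc{x}\int_{\O(x)}[(n-1)(u - \bar u) + \gr_\by u \cdot (\by - \by_0)]\,d\by$. For $x \in [\ep, 1-\ep]$, (\ref{grleu}) gives $|\gr u| \ll \|x\|\sup|u|$ (since $\|x\| \ge \ep$), so $|u(x,\by) - \bar u(x)| \ll \ep\|x\|\sup|u|$ and $|\gr_\by u \cdot (\by - \by_0)| \ll \ep\|x\|\sup|u|$, giving $|\y_1| \ll (\|x\|/x)\ep\om(x)\sup|u| \ll \ep\om(x)\sup|u|$. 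For $\y_2 = (x/\ep)\int_{\de\O(x)}(u - \bar u)v_\ep\,dS$, change variables $\mathbf z = T_x\by$ (with $dS(\by) = (x/\ep)^{n-2}dS(\mathbf z)$ on the $(n-2)$-dimensional boundary) to obtain $\y_2 = (x/\ep)^{n-1}\int_{\de\O_\ep(x)}(\tld u - \bar u)v_\ep\,dS_\ep$ with $\tld u(x, \mathbf z) := u(x, T_x\inv\mathbf z)$. By Lemma \ref{omepdec}(a), $\O_\ep(x)$ is decreasing so $v_\ep \le 0$ pointwise, and by (b), $\int_{\de\O_\ep(x)}|v_\ep|\,dS_\ep = -\om_\ep'(x) \ll \om_\ep(x)/\|x\|$. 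Combining with (\ref{omep}) and $|\tld u - \bar u| \ll \ep\|x\|\sup|u|$ gives $|\y_2| \ll \ep\om(x)\sup|u|$. Finally, $\sup|u|$ is comparable to $\sup|\f|$ via (\ref{supfgesupu}) and the normalization $\f(0) = \bar u(0)$.

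The main difficulty is that the naive bound $|\y| \le \int|u - \bar u||v|\,dS$ with $|u - \bar u| \ll \ep\sup|u|$ is insufficient: a cone-shaped $\O$ has $\int|v|\,dS$ as large as $\om(x)/\ep$, losing a factor of $\ep\inv$. Two ingredients rescue the bound: first, the sharper transverse estimate $\ep\|x\|$ (not just $\ep$) from (\ref{grleu}); second, the decomposition of $v$, which converts the large but rigid radial part $w/x$ into a volume integral (where the $\|x\|$ improvement from the gradient compensates the $1/x$ scaling), leaving only the small scaled part $v_\ep$ on the boundary, whose $L^1$ norm is controlled by the monotonicity of $\O_\ep$.
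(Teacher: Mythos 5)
Your proposal is correct and is, at its core, the same argument as the paper's: both use the rescaling $T_x$ to split the cross-sectional variation into a radial part (controlled by the gradient bound (\ref{grleu}), after converting the boundary integral to a bulk integral over $\O(x)$) and a residual shape-change part (controlled via the monotonicity and log-derivative estimate of $\om_\ep$ in Lemma~\ref{omepdec}). The paper obtains this decomposition by a direct difference-quotient computation of $\bar u'(x_0)$ using the shrinking sets $\Dl\O_\ep(x_1)$, whereas your Reynolds'-transport reformulation $\y = \int_{\de\O(x)}(u - \bar u)v\,dS$ with the velocity split $v = w/x + (x/\ep)v_\ep$ packages the same limiting calculation more transparently; the two first error terms agree because the extra $(n-1)(u-\bar u)$ in your $\y_1$ integrates to zero over $\O(x)$.
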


\begin{proof}
Fix an $x_0 \in [\ep, 1 - \ep]$. Let $v = u - \bar u(x_0)$. Then $\gr v = \gr u$. Let
\[
\bar v(x) = \bar u(x) - \bar u(x_0) = \frac{\int_{\O(x)} v(x, \by)d\by}{\om(x)}
\]
Obviuosly $\bar v(x_0) = 0, \bar u'(x) = \bar v'(x)$.

For any $x_1 \in (x_0, 1-\ep)$,
\begin{equation}\label{ftdx1}
\bar v(x_1) = \frac{\int_{\O(x_1)}v} {\int_{\O(x_1)}1} = \frac{\left| \det T_{x_1}\inv \right| \int_{\O_\ep(x_1)} v \comp T_{x_1}\inv}{\left| \det T_{x_1}\inv \right| \int_{\O_\ep(x_1)}1} = \frac{\int_{\O_\ep(x_1)} v \comp T_{x_1}\inv}{\om_\ep(x_1)}
\end{equation}

By Lemma \ref{omepdec} (a), $\O_\ep(x_1) \sub \O_\ep(x_0)$, so we write $\O_\ep(x_0) = \O_\ep(x_1) \cup \Dl\O_\ep(x_1)$.

Since $\bar v(x_0)=0$, $0 = \int_{\O_\ep(x_0)}v\comp T_{x_0}\inv = \int_{\O_\ep(x_1)} v \comp T_{x_0}\inv + \int_{\Dl\O_\ep(x_1)} v \comp T_{x_0}\inv$. Therefore,
\begin{align}
\nonumber \int_{\O_\ep(x_1)} v \comp T_{x_1}\inv &= \int_{\O_\ep(x_1)} (v \comp T_{x_1}\inv - v \comp T_{x_0}\inv) + \int_{\O_\ep(x_1)} v \comp T_{x_0}\inv\\
\label{intuTinv} &=|\det T_{x_0}| \int_{T_{x_0}\inv \O_\ep(x_1)}(v \comp T_{x_1}\inv \comp T_{x_0} - v) - \int_{\Dl\O_\ep(x_1)} v \comp T_{x_0}\inv
\end{align}
Combining (\ref{ftdx1}) and (\ref{intuTinv}) we get
\begin{equation}\label{ftdx12}
\bar v(x_1) = \frac{\det T_{x_0}}{\om_\ep(x_1)} \int_{T_{x_0}\inv \O_\ep(x_1)}(v \comp T_{x_1}\inv \comp T_{x_0} - v) - \rc{\om_\ep(x_1)} \int_{\Dl\O_\ep(x_1)} v \comp T_{x_0}\inv
\end{equation}

A simple calculation yields $T_{x_1}\inv \comp T_{x_0}(x_0, \by) = \left( x_1, \by + \frac{x_1-x_0}{x_0} (\by - \by_0) \right)$. Thus,
\[
\lim_{x_1\to x_0} \frac{v \comp T_{x_1}\inv \comp T_{x_0} - v}{x_1-x_0} = \de_x v(x_0, \by) + \frac{\de_{\by - \by_0} v(x_0, \by)}{x_0} = \de_x u(x_0, \by) + \frac{\de_{\by - \by_0} u(x_0, \by)}{x_0}
\]
Since $\lim_{x_1\to x_0}\O_{\ep}(x_1) = \O_{\ep}(x_0)$, and $\gr v = \gr u$ is bounded by (\ref{grleu}), we can differentiate under the integral sign.
\begin{align}
\nonumber &\lim_{x_1\to x_0} \rc{x_1 - x_0} \frac{\det T_{x_0}}{\om_\ep(x_1)} \int_{T_{x_0}\inv \O_\ep(x_1)}(v \comp T_{x_1}\inv \comp T_{x_0} - v)\\
\nonumber &=\frac{\det T_{x_0}}{\om_\ep(x_0)} \int_{\O(x_0)} \left( \de_x + \frac{\de_{\by-\by_0}}{x_0} \right) u(x_0, \by)d\by\\
\label{uTinvT-u2} &=\rc{\om(x_0)} \int_{\O(x_0)} \left( \de_x + \frac{\de_{\by-\by_0}}{x_0} \right) u(x_0,\by)d\by
\end{align}
where the last equality follows from (\ref{omep}). Therefore, from (\ref{ftdx12}), (\ref{uTinvT-u2}) and $v(x_0) = 0$, we get
\begin{align}
\nonumber |\bar v'(x_0)| &= \lim_{x_1\to x_0} \left| \frac{\bar v(x_1)}{x_1 - x_0} - \rc{\om(x_0)} \int_{\O(x_0)} \de_x u(x_0, \by)d\by \right|\\
\nonumber &\le \left| \rc{\om(x_0)} \int_{\O(x_0)} \frac{\de_{\by - \by_0} u(x_0, \by)}{x_0}d\by \right| + \lim_{x_1\to x_0}\rc{(x_1 - x_0)\om_\ep(x_1)} \left| \int_{\Dl\O_\ep(x_1)} v \comp T_{x_0}\inv \right|\\
\label{ftd-dexu} &\le \left| \rc{\om(x_0)} \int_{\O(x_0)} \frac{\de_{\by - \by_0} u(x_0, \by)}{x_0}d\by \right| + \rc{\om_\ep(x_0)} \lim_{x_1\to x_0} \rc{x_1-x_0} \left| \int_{\Dl\O_\ep(x_1)} v \comp T_{x_0}\inv \right|
\end{align}
By (\ref{grleu}),
\begin{align}
\nonumber \left| \rc{\om(x_0)} \int_{\O(x_0)} \frac{\de_{\by - \by_0} u(x_0, \by)}{x_0}d\by \right| &\le \rc{\|x_0\|} \sup_{\O(x_0)} |\by - \by_0| \sup_{\O(x_0)}|\gr u(x_0, \by)|\\
\label{deyu} &\ll \ep\sup|\bar u|
\end{align}

On the other hand, by (\ref{ome'}) and (\ref{varulexCu}),
\begin{align}
\nonumber \lim_{x_1 \to x_0} \rc{x_1 - x_0} \left| \int_{\Dl\O_\ep(x_1)} v \comp T_{x_0}\inv \right|  &\le \lim_{x_1 \to x_0} \rc{x_1 - x_0} |\Dl\O_\ep(x_1)|_{\R^{n-1}} \sup_{\Dl\O_\ep(x_1)} |v \comp T_{x_0}\inv|\\
\nonumber &= \om_\ep'(x_0) \sup_{\O_\ep(x_0)} |v \comp T_{x_0}\inv| \ll \frac{\om_\ep(x_0)}{\|x_0\|} \sup_{\O(x_0)} |v|\\
\nonumber &= \frac{\om_\ep(x_0)}{\|x_0\|} \sup_{\O(x_0)} |u(x_0, \by) - \bar u(x_0)|\\
\label{intuTinv2} &\ll \om_\ep(x_0) \ep \|x_0\| \sup |\bar u|
\end{align}

Combining (\ref{ftd-dexu}), (\ref{deyu}) and (\ref{intuTinv2}), together with $\bar v' = \bar u'$, we conclude that, for any $x_0 \in [\ep, 1-\ep]$,
\[
\left| \bar u'(x_0) - \rc{\om(x_0)} \int_{\O(x_0)} \de_x u(x_0, \by)d\by \right| \ll \ep\sup|\bar u|
\]
and hence $|\y(x_0)| \ll \ep \om(x_0) \sup |\bar u|$.
\end{proof}

Using these estimates, we now obtain the bound on the energy of $\bar u$.

\begin{lem}
\begin{equation}\label{f'<f}
\int_\ep^{1-\ep} \om \bar u'^2dx \le (\l + O(\ep)) \int_\ep^{1-\ep} \om \bar u^2dx
\end{equation}
\end{lem}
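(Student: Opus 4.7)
The plan is to build on the identity \eqref{intomf'2}, which has already been derived, and to bound each of the three terms on its right-hand side by $(\l + O(\ep)) \int_\ep^{1-\ep} \om \bar u^2 dx$, possibly up to a small multiple of the energy $A := \int_\ep^{1-\ep} \om \bar u'^2 dx$ itself that can be absorbed in the end.

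The two boundary terms are already handled by \eqref{bdterml} and \eqref{bdtermr}, each of which gives $O(\ep) \int_\ep^{1-\ep} \om \bar u^2 dx$, well within the desired error $O(\ep) \int \om \bar u^2 dx$. The interior contribution $\l \int_\ep^{1-\ep} \om \bar u^2 dx$ is exactly the main term in the target inequality. So everything reduces to controlling the cross term $\int_\ep^{1-\ep} \bar u' \y \, dx$.

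For this cross term I would apply Cauchy--Schwarz in the form
\[
\left| \int_\ep^{1-\ep} \bar u' \y \, dx \right| = \left| \int_\ep^{1-\ep} (\sqrt{\om}\, \bar u') \cdot \frac{\y}{\sqrt{\om}}\, dx \right| \le A^{1/2} \left( \int_\ep^{1-\ep} \frac{\y^2}{\om}\, dx \right)^{1/2}.
\]
Then, using the pointwise estimate \eqref{y<epomf} to obtain $\y^2/\om \ll \ep^2 \om \sup|\bar u|^2$, and invoking \eqref{sup<L2} to trade $\sup|\bar u|^2 \int_0^1 \om\, dx$ for $\int_\ep^{1-\ep} \om \bar u^2 dx =: B$, I get
\[
\int_\ep^{1-\ep} \frac{\y^2}{\om}\, dx \ll \ep^2 B, \qquad \text{hence} \qquad \left| \int_\ep^{1-\ep} \bar u' \y\, dx \right| \ll \ep \, A^{1/2} B^{1/2}.
\]

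Putting everything together, \eqref{intomf'2} gives $A \le \l B + O(\ep) B + O(\ep) A^{1/2} B^{1/2}$. The main obstacle — though a minor one — is that this inequality is not yet linear in the ratio $A/B$. I would close it off by Young's inequality $\ep A^{1/2} B^{1/2} \le \tfrac12 \ep A + \tfrac12 \ep B$, which yields $(1 - O(\ep)) A \le (\l + O(\ep)) B$, and thus $A \le (\l + O(\ep)) B$ for $\ep$ smaller than a fixed constant (as is assumed throughout the paper). This is precisely \eqref{f'<f}.
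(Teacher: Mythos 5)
Your proof is correct and follows essentially the same route as the paper's: both start from \eqref{intomf'2} together with \eqref{bdterml} and \eqref{bdtermr}, reduce matters to the cross term $\int \bar u'\y\,dx$, and bound it by $\ll \ep\, A^{1/2}B^{1/2}$ via \eqref{y<epomf}, Cauchy--Schwarz, and \eqref{sup<L2} (merely applied in a slightly different order). Your closing step via Young's inequality is just an explicit way of "solving the quadratic inequality," which is how the paper concludes.
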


\begin{proof}
Recall that from (\ref{intomf'2}), (\ref{bdterml}) and (\ref{bdtermr}) we know
\[
\int_\ep^{1-\ep} \om \bar u'^2dx \le (\l + O(\ep)) \int_\ep^{1-\ep} \om \bar u^2dx + \int_\ep^{1-\ep} \bar u'\y dx
\]
By (\ref{y<epomf}), Cauchy-Schwarz Inequality and (\ref{sup<L2}),
\begin{align*}
\int_\ep^{1-\ep} \bar u'\y dx &\ll \ep \int_\ep^{1-\ep} \bar u' \om \sup|\bar u|dx \ll \ep \sqrt{\int_\ep^{1-\ep} \om \bar u'^2dx} \sqrt{\int_\ep^{1-\ep} \om \sup|\bar u|^2dx}\\
&\ll \ep \sqrt{\int_\ep^{1-\ep} \om \bar u'^2dx} \sqrt{\int_\ep^{1-\ep} \om \bar u^2dx}\\
\end{align*}
The result follows by solving a quadratic inequality.
\end{proof}

\section{Proof of Theorem \ref{main}, Part (a)}\label{section4}
Since the estimate in (\ref{y<epomf}) only works on $[\ep, 1-\ep]$, we need to cut $\bar u$ off outside this region. Let
\[
\bar u_1(x) =
\begin{cases}
\bar u(\ep), \text{if } x < \ep\\
\bar u(x), \text{if } \ep \le x \le 1-\ep\\
\bar u(1-\ep), \text{if } x > 1-\ep\\
\end{cases}
\]
Since $\bar u_1(x)$ may no longer be orthogonal to the constant function, we adjust it by setting
\begin{equation}\label{cutshift}
\bar u_1^0 = \frac{\int_0^1 \om \bar u_1dx}{\int_0^1 \om dx},\
\bar u_1^\perp = \bar u_1 - \bar u_1^0
\end{equation}

We first show that we have not cut off too much:
\begin{lem}
\begin{equation}\label{f1p>f}
\int_0^1 \om (\bar u_1^\perp)^2dx \ge (1 - O(\ep)) \int_0^1 \om \bar u^2dx
\end{equation}
\end{lem}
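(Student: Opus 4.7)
The plan is to compare $\int_0^1\om(\bar u_1^\perp)^2\,dx$ to $\int_0^1\om\bar u^2\,dx$ in two successive steps: first I replace $\bar u$ by its truncation $\bar u_1$, and then I subtract the projection onto constants.

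\textbf{Step 1: Truncation costs only $O(\ep)$.} On $[\ep,1-\ep]$, $\bar u_1$ coincides with $\bar u$, so
\[
\int_0^1\om\bar u_1^2\,dx-\int_\ep^{1-\ep}\om\bar u^2\,dx=\bar u(\ep)^2\int_0^\ep\om\,dx+\bar u(1-\ep)^2\int_{1-\ep}^1\om\,dx.
\]
By (\ref{omend}) the two tail-integrals of $\om$ are $\ll\ep\int_\ep^{1-\ep}\om\,dx$, and (\ref{sup<L2}) bounds $\sup|\bar u|^2\int_\ep^{1-\ep}\om\,dx\ll\int_\ep^{1-\ep}\om\bar u^2\,dx$, so this difference is $O(\ep)\int_\ep^{1-\ep}\om\bar u^2\,dx$. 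The same two inequalities show that $\int_0^\ep\om\bar u^2\,dx+\int_{1-\ep}^1\om\bar u^2\,dx$ is $O(\ep)\int_\ep^{1-\ep}\om\bar u^2\,dx$ as well. Combining,
\[
\int_0^1\om\bar u_1^2\,dx=(1-O(\ep))\int_0^1\om\bar u^2\,dx.
\]

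\textbf{Step 2: The mean $\bar u_1^0$ is tiny.} The key observation is that $u$ is an eigenfunction with nonzero eigenvalue and hence orthogonal to constants on $\O$, so $\int_0^1\om(x)\bar u(x)\,dx=0$. Therefore
\[
\bar u_1^0\int_0^1\om\,dx=\int_0^1\om(\bar u_1-\bar u)\,dx=\int_0^\ep\om(\bar u(\ep)-\bar u(x))\,dx+\int_{1-\ep}^1\om(\bar u(1-\ep)-\bar u(x))\,dx.
\]
Each integrand is $O(\ep\sup|\bar u|)$ by (\ref{varulexCu}) (applied to $\bar u$ as remarked after that inequality), and again (\ref{omend}) says the measure of the tails is $O(\ep\int_0^1\om\,dx)$. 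Combining these two $\ep$-factors yields $|\bar u_1^0|\ll\ep^2\sup|\bar u|$, hence by (\ref{sup<L2})
\[
(\bar u_1^0)^2\int_0^1\om\,dx\ll\ep^4\sup|\bar u|^2\int_0^1\om\,dx\ll\ep^4\int_0^1\om\bar u^2\,dx.
\]

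\textbf{Step 3: Assemble.} Because $\bar u_1^\perp$ is $\om$-orthogonal to the constant $\bar u_1^0$ by construction (\ref{cutshift}),
\[
\int_0^1\om(\bar u_1^\perp)^2\,dx=\int_0^1\om\bar u_1^2\,dx-(\bar u_1^0)^2\int_0^1\om\,dx.
\]
Substituting the estimates from Steps 1 and 2 gives $\int_0^1\om(\bar u_1^\perp)^2\,dx\ge(1-O(\ep))\int_0^1\om\bar u^2\,dx$, as claimed.

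The only subtle ingredient is the cancellation in Step 2, where without the orthogonality $\int_0^1\om\bar u\,dx=0$ one would only obtain $|\bar u_1^0|\ll\sup|\bar u|$, which is not small enough. Everything else is a routine combination of the cross-sectional volume estimates (\ref{omend}), (\ref{sup<L2}) and the Lipschitz bound (\ref{varulexCu}).
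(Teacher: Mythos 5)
Your proof is correct and follows essentially the same route as the paper: truncate to get $\bar u_1$, show the mean $\bar u_1^0$ is small using the $\om$-orthogonality $\int_0^1\om\bar u\,dx=0$, and subtract via the Pythagorean identity. The only difference is cosmetic: you additionally invoke the Lipschitz bound (\ref{varulexCu}) to sharpen $|\bar u_1^0|$ to $O(\ep^2\sup|\bar u|)$ rather than the paper's cruder $O(\ep\sup|\bar u|)$, but since either suffices for the final $O(\ep)$ loss this buys nothing here.
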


\begin{proof}
Since $\bar u_1 = \bar u$ on $[\ep, 1-\ep]$, by (\ref{omend}) and (\ref{sup<L2}),
\[
\left| \int_0^1 \om(\bar u_1^2 - \bar u^2)dx \right| \ll \sup|\bar u|^2 \int_{\|x\| \le \ep} \om dx\ll \ep \int_0^1 \om \bar u^2dx
\]
so
\begin{equation}\label{f1>f}
\int_0^1 \om \bar u_1^2dx \ge (1 - O(\ep)) \int_0^1 \om \bar u^2dx
\end{equation}

Since $u$ is the first non-constant Neumann eigenfunction on $\O$, $u$ is orthogonal to the constant function.
\[
0 = \int_\O u(x, \by)d\by dx = \int_0^1 \om\bar udx
\]
Therefore, by (\ref{omend}),
\[
\left| \int_0^1 \om \bar u_1dx \right| = \left| \int_0^1 \om (\bar u_1 - \bar u)dx \right| \le \int_{\|x\| \le \ep} \om|\bar u_1 - \bar u|dx \ll \ep \sup |\bar u| \int_0^1 \om dx
\]
Thus $\bar u_1^0 \ll \ep \sup |\bar u|$, so by (\ref{sup<L2}) and (\ref{f1>f}),
\begin{align*}
\int_0^1 \om (\bar u_1^\perp)^2dx &= \int_0^1 \om \bar u_1^2dx - \left( \bar u_1^0 \right)^2 \int_0^1 \om dx \ge \int_0^1 \om \bar u_1^2dx - O(\ep^2) \int_0^1 \om \bar u^2dx\\
&\ge (1 - O(\ep)) \int_0^1 \om\bar u^2dx
\end{align*}
\end{proof}

\begin{lem}
\begin{equation}\label{f1p'<f1p}
\int_0^1 \om (\bar u_1^\perp{}')^2dx \le (\l + O(\ep)) \int_0^1 \om (\bar u_1^\perp)^2dx
\end{equation}
\end{lem}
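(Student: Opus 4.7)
The plan is to reduce this energy inequality to the already-established (\ref{f'<f}) via two elementary observations: (i) adding the constant $\bar u_1^0$ does not affect the derivative, and (ii) the cut-off function $\bar u_1$ is constant outside $[\ep,1-\ep]$, so its derivative vanishes there.

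First I would note that $(\bar u_1^\perp)' = \bar u_1'$, and from the definition of $\bar u_1$,
\[
\bar u_1'(x) = \begin{cases} \bar u'(x), & x \in (\ep, 1-\ep)\\ 0, & \text{otherwise,}\end{cases}
\]
so that
\[
\int_0^1 \om \, (\bar u_1^\perp{}')^2 \, dx = \int_\ep^{1-\ep} \om \, \bar u'^2 \, dx.
\]
Applying the energy estimate (\ref{f'<f}) directly gives
\[
\int_0^1 \om \, (\bar u_1^\perp{}')^2 \, dx \le (\l + O(\ep)) \int_\ep^{1-\ep} \om \, \bar u^2 \, dx.
\]

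Next I would pass from an integral of $\bar u^2$ over $[\ep,1-\ep]$ to one of $(\bar u_1^\perp)^2$ over $[0,1]$. Trivially, $\int_\ep^{1-\ep} \om \, \bar u^2\,dx \le \int_0^1 \om \, \bar u^2\,dx$, and by (\ref{f1p>f}) we have $\int_0^1 \om \, \bar u^2\,dx \le (1 + O(\ep))\int_0^1 \om \, (\bar u_1^\perp)^2 \, dx$. Substituting this into the previous display and absorbing the extra $O(\ep)$ factor (recall $\l$ is uniformly bounded) yields the desired inequality.

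There is no real obstacle here; the lemma is essentially a bookkeeping corollary of (\ref{f'<f}) and (\ref{f1p>f}). The only point warranting a line of care is confirming that $\bar u_1'$ vanishes on the cut-off intervals (so that the Dirichlet energy of $\bar u_1^\perp$ reduces to that of $\bar u$ on $[\ep, 1-\ep]$), and that the additive constant $\bar u_1^0$ plays no role on the left side but does allow the use of (\ref{f1p>f}) on the right side.
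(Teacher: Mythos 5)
Your proof is correct and follows essentially the same route as the paper: reduce the left side to $\int_\ep^{1-\ep}\om\bar u'^2$, apply (\ref{f'<f}), and then upgrade $\int\om\bar u^2$ to $\int\om(\bar u_1^\perp)^2$ via (\ref{f1p>f}), absorbing the $1/(1-O(\ep))$ factor using the uniform bound on $\l$. The only thing the paper does differently is write the final chain slightly less carefully (its middle term appears to be a typo for $\int_0^1\om\bar u^2\,dx$); your version is the cleaner statement of the same argument.
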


\begin{proof}
Since $\bar u_1^0$ is a constant, and $\bar u_1$ is constant on $\O \cap \{\|x\| \le \ep\}$,
\[
\int_0^1 \om (\bar u_1^\perp{}')^2dx = \int_0^1 \om (\bar u_1')^2dx = \int_\ep^{1-\ep} \om (\bar u_1')^2dx
\]
Therefore, by (\ref{f'<f}), (\ref{f1p>f}) and the uniform bound of $\l$,
\begin{align*}
\int_0^1 \om (\bar u_1^\perp{}')^2dx &\le (\l + O(\ep)) \int_0^1 \om \bar u_1^2dx
\le \frac{\l + O(\ep)}{1 - O(\ep)} \int_0^1 \om \bar u_1^2dx\\
&\le (\l + O(\ep)) \int_0^1 \om (\bar u_1^\perp)^2dx
\end{align*}
\end{proof}

\begin{prop}\label{mu<l+Ce}
$\l \le \mu \le \l + O(\ep)$.
\end{prop}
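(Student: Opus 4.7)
The plan is to read off both inequalities from the variational principle, with the lower bound $\lambda \le \mu$ being essentially immediate and the upper bound $\mu \le \lambda + O(\epsilon)$ being the content of what has just been assembled in this section.

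For $\lambda \le \mu$: the Rayleigh quotient characterization of $\lambda$ says
\[
\lambda = \inf\left\{ \frac{\int_\O |\gr v|^2}{\int_\O v^2} : v \in H^1(\O),\ \int_\O v = 0,\ v \not\equiv 0 \right\}.
\]
I would take the ODE eigenfunction $\f$ and view it as a function on $\O$ that depends only on $x$. Then $|\gr \f|^2 = \f'(x)^2$ and Fubini reduces both integrals over $\O$ to weighted integrals against $\om$ on $[0,1]$. Orthogonality to constants on $\O$ becomes $\int_0^1 \om \f \, dx = 0$, which holds because $\f$ is the first non-constant Neumann eigenfunction of (\ref{ODE}). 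The Rayleigh quotient of $\f$ on $\O$ then equals $\mu$, giving $\lambda \le \mu$. This is exactly the observation flagged in the remark.

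For $\mu \le \lambda + O(\ep)$: I would use the symmetric variational characterization of $\mu$,
\[
\mu = \inf\left\{ \frac{\int_0^1 \om \psi'^2\,dx}{\int_0^1 \om \psi^2\,dx} : \psi \in H^1([0,1],\om\,dx),\ \int_0^1 \om\psi\,dx = 0,\ \psi \not\equiv 0 \right\},
\]
and plug in the test function $\bar u_1^\perp$ built in (\ref{cutshift}). The orthogonality condition $\int_0^1 \om \bar u_1^\perp \, dx = 0$ holds by the very definition of $\bar u_1^\perp$, so it is admissible (provided it is not identically zero, which follows from (\ref{f1p>f}) together with $\int_0^1 \om \bar u^2 \, dx > 0$). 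The bound (\ref{f1p'<f1p}) is precisely the statement that this test function has Rayleigh quotient at most $\l + O(\ep)$, so the variational characterization gives $\mu \le \l + O(\ep)$.

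There is no real obstacle at this stage, since the genuinely technical work, namely the energy estimate (\ref{f'<f}) on $\bar u$, the small correction needed to restore orthogonality to constants (\ref{f1p>f}), and the resulting Rayleigh bound (\ref{f1p'<f1p}) on the cut-off-and-recentered function, has already been carried out. The only thing to be mildly careful about is noting that $\bar u_1^\perp \not\equiv 0$ so that it is a legitimate competitor in the infimum; this is immediate from (\ref{f1p>f}) once $\ep$ is smaller than a fixed dimensional constant.
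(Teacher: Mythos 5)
Your proof is correct and follows the paper's argument essentially verbatim: the lower bound $\l \le \mu$ is obtained by testing the Rayleigh quotient of the Laplacian on $\O$ against $\f(x)$ viewed as a function constant in $\by$ (the paper defers this to a remark citing \cite{CJK2009}), and the upper bound $\mu \le \l + O(\ep)$ is read off from (\ref{f1p'<f1p}) by using $\bar u_1^\perp$ as a test function in the variational characterization of $\mu$. Your additional remark that (\ref{f1p>f}) guarantees $\bar u_1^\perp \not\equiv 0$ is a small but legitimate point of care that the paper leaves implicit.
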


\begin{proof}
By design (\ref{cutshift}), $\bar u_1^\perp$ is orthogonal to the constant function, so is an valid test function for the Rayleigh quotient of the ODE (\ref{ODE}). By (\ref{f1p'<f1p}),
\[
\mu \le \frac{\int_0^1 \om (\bar u_1^\perp)'^2dx} {\int_0^1 \om (\bar u_1^\perp)^2dx} \le \l + O(\ep)
\]
\end{proof}

Lemma 3.1 in \cite{CJK2009} provides a constant lower bound for $\mu$. Together with Proposition \ref{mu<l+Ce}, it implies
\begin{equation}\label{eigenconst}
1 \ll c - O(\ep) \le \mu - O(\ep) \le \l \le \mu \le \l + O(\ep) \ll 1
\end{equation}

\section{Estimates of the ODE (\ref{ODE})}
In this section we make preparations for the ODE comparison estimates in the next section. Note that by Theorem \ref{wellpose} in Appendix A, there is a unique solution to (\ref{ODE}) with Neumann boundary conditions at 0 and 1 in the classical sense, i.e. we can talk about $\f(0)$, $\f'(0)$, $\f(1)$ and $\f'(1)$, and can normalize $\f$ so that $\f(0) = \bar u(0+) > 0$.

First note that $\f$ is monotonely decreasing because
\[
\f'(x) = 
\begin{cases}
-\rc{\om(x)} \int_0^x \mu \om(s) \f(s)ds,\ x \le s_1\\
\rc{\om(x)} \int_x^1 \mu \om(s) \f(s)ds,\ x \ge s_1
\end{cases} \le 0
\]

Next we show an analog of (\ref{|s0|>d}).
\begin{lem}
There is a constant $\d_1 > 0$ such that
\begin{equation}\label{pd>cp0}
\f(x) \ge \f(0)/2,\ \all x \le \d_1,\, \f(x) \le -\inf \f(1)/2,\ \all x \ge 1 - \d_1
\end{equation}
\end{lem}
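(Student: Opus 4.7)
The proof parallels the argument for (\ref{|s0|>d}) but uses the structure of the ODE rather than the gradient estimate. Since $\phi$ is monotonely decreasing (as just observed before the statement), $\phi(0)>0$, and $s_1$ is the unique zero of $\phi$, we have $0\le\phi(s)\le\phi(0)$ on $[0,s_1]$ and $\phi(1)\le\phi(s)\le 0$ on $[s_1,1]$. The strategy is to integrate the formula for $\phi'$ to bound how fast $\phi$ can move away from $\phi(0)$ (resp.\ $\phi(1)$) near the endpoints, and then conclude by a continuity/contradiction argument that also shows $s_1$ is bounded away from $0$ and $1$.

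First I would use the representation for $x\le s_1$,
\[
\phi(0)-\phi(x)=\mu\int_0^x\frac{1}{\om(t)}\int_0^t\om(s)\phi(s)\,ds\,dt,
\]
which follows from integrating the ODE together with the Neumann condition at $0$. Using $\phi(s)\le\phi(0)$ on $[0,s_1]$, everything reduces to controlling $\int_0^t\om(s)\,ds/\om(t)$ for small $t$. This is exactly what the convexity estimate Lemma \ref{omes}(a) gives: for $s\le t\le 1/2$, the second inequality in (\ref{omaomb}) yields $\om(s)\le\om(t)\bigl(\tfrac{1-s}{1-t}\bigr)^{n-1}\le 2^{n-1}\om(t)$, so $\int_0^t\om(s)\,ds\le 2^{n-1}t\,\om(t)$. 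Substituting gives
\[
\phi(0)-\phi(x)\le 2^{n-2}\mu\,\phi(0)\,x^2\qquad\text{for }x\le\min(s_1,1/2).
\]

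Since $\mu\ll 1$ by (\ref{eigenconst}), I can choose a dimensional constant $\d_1>0$ with $2^{n-2}\mu\,\d_1^2\le 1/2$. Then for $x\le\min(s_1,\d_1)$ the estimate above yields $\phi(x)\ge\phi(0)/2>0$. Since $\phi(s_1)=0$, this forces $s_1\ge\d_1$, and the bound $\phi(x)\ge\phi(0)/2$ then holds on all of $[0,\d_1]$. A fully symmetric argument, starting from
\[
\phi(x)-\phi(1)=-\mu\int_x^1\frac{1}{\om(t)}\int_t^1\om(s)\phi(s)\,ds\,dt
\]
and using $\phi(1)\le\phi(s)\le 0$ together with the first inequality of (\ref{omaomb}) applied to $t\ge 1/2$, establishes the corresponding bound near $x=1$. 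I do not expect any real obstacle here: the only inputs are the monotonicity of $\phi$, the convexity estimates of $\om$, and the dimensional upper bound on $\mu$, all of which are already at our disposal.
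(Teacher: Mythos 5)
Your proposal is correct and is essentially the same argument as the paper's: both write $\om(t)\f'(t) = -\mu\int_0^t \om(s)\f(s)\,ds$, bound $\f(s)\le\f(0)$ and $\om(s)/\om(t)$ via the convexity estimate (\ref{omaomb}), integrate to get a quadratic-in-$x$ bound on $\f(0)-\f(x)$, and then use $\mu\ll 1$ from (\ref{eigenconst}) to pick $\d_1$ and simultaneously conclude $s_1>\d_1$. The only cosmetic difference is that you restrict to $x\le 1/2$ to replace $(1/(1-x))^{n-1}$ by $2^{n-1}$, whereas the paper carries the factor along explicitly.
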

\begin{proof}
Since
$\f$ is monotonely decreasing, by (\ref{omaomb}), for all $x \le s_1$,
\[
\f'(x) = -\rc{\om(x)} \int_0^x \mu \om(s) \f(s)ds \ge -\mu x \left( \rc{1-x} \right)^{n-1} \f(0)
\]
By (\ref{eigenconst}), there is a constant $\d_1 > 0$ such that for all $x \le \min(\d_1, s_1)$ we have
\[
\f(0) - \f(x) = -\int_0^x \f'(s)ds \le \mu x \sup_{s \le x} s \left( \rc{1-x} \right)^{n-1} \f(0) \le \frac{\f(0)}{2}
\]
In particular, $\f(\min(\d_1, s_1)) > 0$, which implies that $s_1 > \d_1$, as well as the first half of (\ref{pd>cp0}). The second half follows symmetrically.
\end{proof}

Then we give a ``reverse gradient estimate" for $\f$, which shows that $\f$ is decreasing fast enough in the middle. 
\begin{lem}
\begin{equation}\label{p'<-cp}
-\f'(x) \gg_\d \f(0),\, \all \|x\| \ge \d
\end{equation}
\end{lem}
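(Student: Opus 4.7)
My plan is to treat the two halves $[\d,s_1]$ and $[s_1,1-\d]$ separately, handling the left half directly from the integrated form of the ODE and obtaining the right half by a reflection symmetry, once I establish the auxiliary comparability $|\f(1)|\gg\f(0)$.

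\textbf{Left half.} Integrating $-(\om\f')'=\mu\om\f$ from $0$ to $x$ and using the Neumann condition at $0$ gives $-\f'(x)=\frac{\mu}{\om(x)}\int_0^x\om(s)\f(s)\,ds$. Without loss of generality assume $\d<\d_1$ (otherwise replace $\d$ by $\d_1$ and the statement only gets stronger). For $x\in[\d,s_1]$ the integrand is nonnegative, and by (\ref{pd>cp0}) we have $\f\ge\f(0)/2$ on $[0,\min(x,\d_1)]$, so
\[
-\f'(x) \;\ge\; \frac{\mu\,\f(0)}{2}\cdot\frac{\int_0^{\min(x,\d_1)}\om(s)\,ds}{\om(x)}.
\]
The crux is to show the ratio on the right is $\gg_\d 1$. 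The key geometric estimate is $\om(s)\ge\om(x)(s/x)^{n-1}$ for $s\le x$, an immediate consequence of (\ref{omaomb}); integrating yields $\int_0^x\om\ge\om(x)\,x/n$. For $x\in[\d,\d_1]$ this gives the ratio $\ge x/n\ge\d/n$ directly; for $x\in[\d_1,s_1]$ the same inequality applied to $[0,\d_1]$ together with $\om(\d_1)\ge\om(x)\d_1^{n-1}$ gives $\int_0^{\d_1}\om\ge\om(x)\d_1^n/n$. Multiplying by the uniform lower bound $\mu\gg1$ from (\ref{eigenconst}) yields $-\f'(x)\gg_\d\f(0)$ on the left half.

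\textbf{Right half and endpoint comparability.} For $x\in[s_1,1-\d]$ I would use the reflection $\tilde\f(y):=-\f(1-y)$, which solves the same Neumann ODE on $[0,1]$ with reflected weight $\tilde\om(y):=\om(1-y)$; the Brunn--Minkowski concavity hypothesis is preserved under reflection, $\tilde\f(0)=|\f(1)|>0$, and the second half of (\ref{pd>cp0}) for $\f$ becomes the first half for $\tilde\f$. Applying the left-half argument to $\tilde\f$ gives $-\f'(x)\gg_\d|\f(1)|$ on $[s_1,1-\d]$, so it only remains to show $|\f(1)|\gg\f(0)$. Using the orthogonality $\int_0^1\om\f\,dx=0$ together with the sign of $\f$ on each side of $s_1$,
\[
\tfrac{\f(0)}{2}\int_0^{\d_1}\om\,dx \;\le\; \int_0^{s_1}\om\f\,dx \;=\; \int_{s_1}^1\om|\f|\,dx \;\le\; |\f(1)|\int_0^1\om\,dx;
\]
then (\ref{omb1}) applied with $a=\d_1/2$, $b=\d_1$ gives $\int_{\d_1}^1\om\ll_{\d_1}\int_0^{\d_1}\om$, hence $\int_0^1\om\ll_{\d_1}\int_0^{\d_1}\om$, and the chain above forces $|\f(1)|\gg\f(0)$.

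The step I expect to be the least obvious is the endpoint comparability: the upper bound $|\f(1)|\le\f(0)$ is free from monotonicity, but the reverse bound is not a priori clear and depends essentially on the convexity-based estimate (\ref{omb1}), which prevents $\om$ from concentrating all of its mass near a single endpoint.
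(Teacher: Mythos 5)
Your proposal is correct and follows essentially the same route as the paper: integrate the ODE, use the lower bound (\ref{pd>cp0}) on $\f$ near $0$ together with (\ref{omaomb}) and (\ref{eigenconst}) to handle $[\d,s_1]$, establish the endpoint comparability $-\f(1)\gg\f(0)$ from orthogonality, monotonicity and (\ref{om01})/(\ref{omb1}), and treat $[s_1,1-\d]$ by reflection. The only difference is cosmetic ordering (the paper proves the endpoint comparability first and cites ``a symmetric argument'' rather than spelling out the reflected weight).
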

\begin{proof}
Since $\f$ is the first non-constant eigenfunction of the ODE (\ref{ODE}), it is orthogonal to the constant function. Combining this with the monotocity of $\f$, (\ref{pd>cp0}) and (\ref{om01}), we get
\[
-\f(1) \int_0^1 \om dx > -\int_{s_1}^1 \om  \f dx = \int_0^{s_1} \om \f dx > \frac{\f(0)}{2} \int_{\d_1/2}^{\d_1} \om dx \gg \f(0) \int_0^1 \om dx
\]
so $\f(0) \ll -\f(1)$. A symmetric argument then gives
\begin{equation}\label{infp<-csupp}
\f(0) \ll -\f(1) \ll \f(0)
\end{equation}

Note that $\om(x) \f'(x) = -\mu \int_0^x \om(s) \f(s)ds$. If $x \in [\d, s_1]$, then by (\ref{pd>cp0}), (\ref{eigenconst}) and (\ref{omaomb}),
\[
-\om(x) \f'(x) \gg \f(0) \int_0^{\d_1 \wed \d} \om(s)ds \ge \om(x) \f(0) \int_0^{\d_1 \wed \d} \left( \frac{s}{x} \right)^{n-1}ds \gg_\d \om(x) \f(0)
\]
A symmetric argument, together with (\ref{infp<-csupp}), gives (\ref{p'<-cp}) for $x \in [s_1, 1 - \d]$.
\end{proof}

On the other hand, $\f$ does not change much on the ends.
\begin{lem}
For all $\|x\| \le \ep$,
\begin{equation}\label{dp<Ce2}
|\f(x_1) - \f(x_2)| \ll \ep^2 \f(0) = \ep^2 \bar u(0+)
\end{equation}
\end{lem}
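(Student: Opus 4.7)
The plan is to derive a pointwise bound $|\f'(x)| \ll \ep\f(0)$ on each of the two end intervals $[0,\ep]$ and $[1-\ep,1]$, and then integrate over an interval of length at most $\ep$ to harvest the extra factor of $\ep$. The key observation is that the integral representation of $\f'$ already recorded at the start of this section is ``automatically'' small when the lower (respectively upper) limit of integration is close to $0$ (respectively $1$), because the window of integration is then short.

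First I would treat the left end, assuming $x_1,x_2 \in [0,\ep]$ and taking $\ep$ small enough that $\ep < \d_1 < s_1$ (the strict inequality $s_1 > \d_1$ was extracted in the proof of (\ref{pd>cp0})). The formula
$$\f'(x) = -\frac{\mu}{\om(x)} \int_0^x \om(s)\f(s)\,ds$$
is then valid for every $x \in [0,\ep]$. Since $\f$ is monotonely decreasing and nonnegative on $[0,s_1]$, I have $0 \le \f(s) \le \f(0)$ throughout $[0,x]$. For $\om(s)/\om(x)$ I would invoke the \emph{second} inequality of (\ref{omaomb}) with $a=s$ and $b=x$ to obtain $\om(s) \le \om(x)\bigl((1-s)/(1-x)\bigr)^{n-1}$, which is bounded by a dimensional constant times $\om(x)$ uniformly over $s \le x \le \ep$. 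Together with the uniform upper bound on $\mu$ from (\ref{eigenconst}) this gives
$$|\f'(x)| \ll \frac{\f(0)}{\om(x)} \cdot x\,\om(x) \ll \ep\,\f(0),$$
so integration between any two points of $[0,\ep]$ yields the desired $\ep^2\f(0)$ estimate.

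For the right end I would use the symmetric representation $\f'(x) = \om(x)^{-1}\int_x^1 \mu\om(s)\f(s)\,ds$, valid for $x \ge s_1$ (again ensured by $\ep < \d_1$). The role previously played by $\f(0)$ as the uniform size of $\f$ on the integration window is now played by $|\f(1)|$, which by (\ref{infp<-csupp}) is itself $\ll \f(0)$. The \emph{first} inequality of (\ref{omaomb}) controls $\om(s)/\om(x)$ by a constant when $x \le s$ are both close to $1$, and the argument then proceeds verbatim.

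I do not anticipate a serious obstacle: the argument is essentially a first-order Taylor estimate, exploiting the integral formula for $\f'$ to extract an additional factor of $x$ (respectively $1-x$) from the lower (respectively upper) limit of integration. The only point to watch is selecting the correct half of (\ref{omaomb}) so as to obtain an \emph{upper} bound on $\om(s)$ in terms of $\om(x)$ in each regime.
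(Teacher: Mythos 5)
Your proposal is correct and essentially the same as the paper's argument: both obtain $|\f'(x)| \ll \ep\,\f(0)$ on each end interval from the integral formula for $\f'$, bounding $\f$ by $\f(0)$ (or by $|\f(1)|\ll\f(0)$ on the right end via (\ref{infp<-csupp})), and then integrate to pick up the second factor of $\ep$. The only cosmetic difference is that the paper cites (\ref{p'leCxp}) from the appendix (which silently uses $\om(s)\le\om(x)$ for small $s\le x$), while you justify the comparison of $\om(s)$ and $\om(x)$ explicitly from (\ref{omaomb}); that is a slightly more self-contained bookkeeping but not a different idea.
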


\begin{proof}
By symmetry and (\ref{infp<-csupp}) we only show the case when $x \le \ep$.
By (\ref{p'leCxp}) in Appendix and (\ref{eigenconst}),
\[
|\f'(x)| \le \mu x \sup_{s \le x} |\f(s)| \ll \ep \f(0)
\]
so
\[
|\f(x_1) - \f(x_2)| \le \int_{x_1}^{x_2} |\f'(s)|ds \ll \ep^2 \f(0)
\]
\end{proof}

Equation (\ref{dp<Ce2}) allows us to connect $\f$ with $\bar u$, at least one one end.
\begin{cor}
For all $x \le \ep$,
\begin{equation}\label{f-p<Ce2}
|\bar u(x) - \f(x)| \ll \ep^2 \sup|\bar u|
\end{equation}
\end{cor}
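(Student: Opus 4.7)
The plan is to decompose via the triangle inequality, for $x \le \ep$:
\[
|\bar u(x) - \f(x)| \le |\bar u(x) - \bar u(0+)| + |\bar u(0+) - \f(0)| + |\f(0) - \f(x)|.
\]
The middle term vanishes by the normalization $\f(0) = \bar u(0+)$. The last term is already controlled by (\ref{dp<Ce2}) applied with $x_1 = 0$, $x_2 = x$: $|\f(x) - \f(0)| \ll \ep^2 \f(0) = \ep^2 \bar u(0+) \ll \ep^2 \sup|\bar u|$, where the final step uses (\ref{supfgesupu}) (together with (\ref{infule-csupu})) to turn $\bar u(0+)$ into $\sup|\bar u|$ up to a constant. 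The work thus reduces to showing $|\bar u(x) - \bar u(0+)| \ll \ep^2 \sup|\bar u|$ on $[0,\ep]$.

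A blunt application of (\ref{varulexCu}) would yield only the $O(\ep)$ bound, which is insufficient. Instead one should return to the pointwise gradient bound (\ref{grleu}) and exploit its extra smallness on the end slab: for $x \le \ep$ one has $\|x\| \le \ep$, so (\ref{grleu}) specialises to $|\gr u| \ll \ep \sup u$ throughout $\O \cap \{x \le \ep\}$. Since this slab is contained in $[0,\ep] \times B^{n-1}(\ep)$, its own diameter is also $O(\ep)$, and integrating the sharpened gradient bound over a short path will produce a variation of order $\ep \cdot \ep = \ep^2$.

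Concretely, fix $\by_0$ with $(0+,\by_0) \in \bar\O$ and $u(0+,\by_0) = \bar u(0+)$, as supplied by (\ref{supfgesupu}). For each $(x,\by) \in \O(x)$ with $x \le \ep$, convexity of $\O$ places the straight segment from $(0+,\by_0)$ to $(x,\by)$ inside $\O$, and its length $\sqrt{x^2 + |\by-\by_0|^2}$ is $\ll \ep$ because $|\by|, |\by_0| \le \ep$. Along this segment the $x$-coordinate stays in $[0,\ep]$, so (\ref{grleu}) gives $|\gr u| \ll \ep \sup u$ uniformly, and the fundamental theorem of calculus yields $|u(x,\by) - \bar u(0+)| \ll \ep^2 \sup u$. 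Averaging in $\by$ over $\O(x)$ then gives $|\bar u(x) - \bar u(0+)| \ll \ep^2 \sup u \ll \ep^2 \sup|\bar u|$, and the triangle inequality finishes the proof. The only real conceptual obstacle is noticing that one must bypass (\ref{varulexCu}) and re-integrate (\ref{grleu}) over a region whose diameter is genuinely $O(\ep)$; the two factors of $\ep$ together furnish the $\ep^2$.
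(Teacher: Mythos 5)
Your proof is correct, and it follows the same outline as the paper's one-line proof (split via the normalization point $\bar u(0+) = \f(0)$, use (\ref{dp<Ce2}) for the $\f$ piece, and bound the variation of $\bar u$ on $[0,\ep]$). The interesting thing is that you noticed a genuine issue in the paper's stated citation: the paper invokes (\ref{varulexCu}), but as written that corollary gives $|\bar u(x) - \bar u(0+)| \ll (x + \ep)\sup|u| = O(\ep)\sup|\bar u|$ on $[0,\ep]$, which falls one power of $\ep$ short. You correctly repair this by returning to the underlying gradient estimate (\ref{grleu}): on the slab $\{x \le \ep\}$ the bound sharpens to $|\gr u| \ll \ep \sup u$, the slab has diameter $O(\ep)$, the straight segment from $(0,\by_0)$ to any $(x,\by) \in \O(x)$ stays inside $\O$ by convexity, and integrating along it produces $|u(x,\by) - \bar u(0+)| \ll \ep^2 \sup u$; averaging in $\by$ and using (\ref{supfgesupu}) / (\ref{infule-csupu}) to convert $\sup u$ into $\sup|\bar u|$ gives the required $\ep^2$. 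So this is not a different route so much as a correct filling-in of what the paper leaves implicit, and it exposes that the paper's proof should really have cited (\ref{grleu}) (or a slab-refined version of (\ref{varulexCu})) rather than (\ref{varulexCu}) itself.
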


\begin{proof}
Combine the normalization $\f(0) = \bar u(0+)$, (\ref{dp<Ce2}) and (\ref{varulexCu}).
\end{proof}

\section{Proof of Theorem \ref{main}, Parts (b) and (c)}
We normalize $\f$ so that $\f(0) = \bar u(0+)$ as before. The following proposition makes it possible approximate $\bar u$ (and even $u$) by $\f$.

\begin{prop}\label{f-psi<Ce}
For all $x \in [0, 1]$, $|\bar u(x) - \f(x)| \ll \ep \sup|\bar u|$.
\end{prop}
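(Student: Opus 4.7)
The plan is to set $v = \bar u - \f$ and carry out an ODE comparison. Subtracting (\ref{ODE}) from (\ref{ODEy}) gives the second-order inhomogeneous ODE
\[
(\om v')' + \l \om v = \y' + (\mu - \l)\om \f,
\]
with the initial condition $v(0+) = 0$ from the normalization $\bar u(0+) = \f(0)$.

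First I would integrate once from $0$. Using $\om(0) = 0$ and $\y(0) = 0$, the boundary terms drop out, yielding
\[
\om(x) v'(x) + \l \int_0^x \om v\,ds = \y(x) + (\mu - \l)\int_0^x \om \f\,ds.
\]
The source terms are each of size $O(\ep)$: the bound $|\y(x)| \ll \ep\om(x)\sup|\bar u|$ on $[\ep, 1-\ep]$ comes from (\ref{y<epomf}), and integrating (\ref{ODE}) from $0$ gives the useful identity $\om(x)\f'(x) = -\mu \int_0^x \om \f$, so the $\f$-source reduces to $-\tfrac{\mu-\l}{\mu}\om(x)\f'(x)$, which is also $O(\ep)$ times a bounded quantity by Proposition \ref{mu<l+Ce}.

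Next I would split $[0,1]$ into three regions. On $[0,\ep]$ the inequality (\ref{f-p<Ce2}) already gives the stronger bound $|v(x)| \ll \ep^2\sup|\bar u|$, and the analogous estimate at the right endpoint (built from (\ref{supfgesupu}), (\ref{dp<Ce2}), and (\ref{varulexCu}) by symmetry) handles $[1-\ep, 1]$. For the middle region $[\ep, 1-\ep]$, I would apply a Gronwall-type argument to the integral equation displayed above, taking $|v(\ep)| \ll \ep^2\sup|\bar u|$ from the first region as initial data and combining it with the $O(\ep)$ source estimates. Setting $M(x) = \sup_{t \in [\ep,x]}|v(t)|$, the loop closes to yield $M(x) \ll \ep\sup|\bar u|$ on $[\ep, 1-\ep]$.

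The principal technical difficulty is that isolating $v'(x)$ in order to integrate back to $v(x)$ requires dividing by $\om(x)$, which may be small near the endpoints of $[\ep, 1-\ep]$. Controlling the resulting integrals requires using the convexity-based ratio estimates (\ref{omaomb}) from Lemma \ref{omes} to compare $\om$ at different points, ensuring that the kernel in the Gronwall iteration has integrable tails and so produces the clean $O(\ep\sup|\bar u|)$ bound rather than something weaker.
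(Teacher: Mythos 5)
Your overall strategy — comparing $\bar u$ and $\f$ via a Gronwall argument on the inhomogeneous ODE satisfied by $v = \bar u - \f$, with source terms of size $O(\ep)$ — is the same as the paper's, and the first-order integral form you write down is correct. (The boundary term $\om(0)v'(0) - \y(0)$ vanishes regardless of whether $\om(0) = 0$, since by definition $\y(0) = \om(0)\bar u'(0)$ and $\f'(0) = 0$; your justification via "$\om(0) = 0$" is not actually needed and is not always true.) The essential gap is at the right endpoint.

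You assert that ``the analogous estimate at the right endpoint (built from (\ref{supfgesupu}), (\ref{dp<Ce2}), and (\ref{varulexCu}) by symmetry) handles $[1-\ep, 1]$.'' This is not available: the normalization fixes only $\f(0) = \bar u(0+)$, and the proof of (\ref{f-p<Ce2}) uses that normalization in an essential way. There is no a priori reason for $|\bar u(1-) - \f(1)|$ to be $O(\ep^2\sup|\bar u|)$, or even $O(\ep\sup|\bar u|)$ — that is precisely one of the things that must be proved. And a single left-to-right Gronwall across all of $[\ep, 1-\ep]$ cannot supply the missing estimate: the kernel involves $\int_0^x \om(s)\,ds/\om(x)$, and the one-sided bound from (\ref{omaomb}), namely $\om(s) \le \om(x)\bigl((1-s)/(1-x)\bigr)^{n-1}$ for $s<x$, lets this ratio grow like $1/(1-x)$ near the right end, so integrating the kernel over $[\ep, 1-\ep]$ costs a factor of $\log(1/\ep)$ and yields only $O(\ep\log(1/\ep))$.

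The paper avoids this by a bootstrap. It runs the left Gronwall only up to a fixed $1-\d$, giving $L(1-\d) \ll_\d \ep\sup|\bar u|$. It then runs a \emph{second} Gronwall from $1-\ep$ leftward to $1-\d$, obtaining $R(1-\d) \le C(\d)\bigl(\ep\sup|\bar u| + |\k(1-\ep)|\bigr)$ with $|\k(1-\ep)|$ still unknown. Writing $\k(1-\d) = \k(1-\ep) - \int_{1-\d}^{1-\ep}\k'$ shows $\k(1-\d) \ge (1-\d C(\d))\k(1-\ep) - C(\d)\ep\sup|\bar u|$, and choosing $\d = \d_1$ small enough that $\d_1 C(\d_1) < 1/2$ lets one solve this for $\k(1-\ep)$ using the already-known bound on $L(1-\d_1)$; the result $|\k(1-\ep)| \ll \ep\sup|\bar u|$ is then fed back into the right-side Gronwall and into (\ref{dp<Ce2}), (\ref{varulexCu}) to cover $[1-\d_1,1]$. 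You need an argument of this type — without it, your proof is incomplete near $x=1$.
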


\begin{proof}
Recall that $\bar u$ satisfies
\[
\om(x) \bar u'(x) = \y(x) - \int_0^x \l \om(s) \bar u(s) ds = \y(x) + \int_x^1 \l \om(s) \bar u(s) ds
\]
because of $\bar u$ has mean zero. Similarly, $\f(x)$ satisfies
\[
\om(x) \f'(x) =  -\int_0^x \mu \om(s) \f(s) ds = \int_x^1 \mu \om(s) \bar u(s) ds
\]

We define $\k(x) = \bar u(x) - \f(x)$, and
\[
\ytd(x) = \y(x) - \int_0^x (\l - \mu) \om(s) \bar u(s) ds = \y(x) + \int_x^1 (\l - \mu) \om(s) \bar u(s) ds
\]
Then
\begin{equation}\label{ODEk}
\om(x) |\k(x)|' \le |\om(x) \k'(x)| \le |\ytd(x)| + \min\left( \int_0^x \mu \om(s) |\k(s)|ds, \int_x^1 \mu \om(s) |\k(s)|ds \right)
\end{equation}

By (\ref{y<epomf}), (\ref{eigenconst}) and (\ref{omaomb}), for all $x \in [\ep, 1 - \ep]$,
\begin{align}
\nonumber |\ytd(x)| &\le |\y(x)| + \min\left( \left| \int_0^x (\l - \mu) \om(s) \bar u(s)ds \right|, \left| \int_x^1 (\l - \mu) \om(s) \bar u(s)ds \right| \right)\\
\nonumber &\ll \ep \om(x) \sup |\bar u| + \ep \sup |\bar u| \om(x) \min\left( \int_0^x \left( \frac{1 - s}{1 - x} \right)^{n-1}ds, \int_x^1 \left( \frac{s}{x} \right)^{n-1}ds \right)\\
\label{ytd<epomf} &\ll \ep \om(x) \sup |\bar u|
\end{align}

Let $L(x) = \sup_{s \le x} |\k(s)|$. Pick $\d > 0$. Combining (\ref{ODEk}), (\ref{ytd<epomf}), (\ref{eigenconst}) and (\ref{omaomb}), we get, for all $x \in [\ep, 1 - \d]$,
\begin{align*}
|\om(x) \k'(x)| \ll \ep \om(x) \sup |\bar u| + L(x) \int_0^x \om(s)ds \ll_\d \om(x) (\ep \sup |\bar u| + L(x))
\end{align*}
Hence
\[
L'(x) \le |\k'(x)| \ll_\d \ep \sup |\bar u| + L(x)
\]
By (\ref{f-p<Ce2}), $L(\ep) \ll \ep \sup |\bar u|$. Now Gronwall's inequality gives, for all $x \in [\ep, 1 - \d]$,
\begin{equation}\label{f-p<dCe}
|\bar u(x) - \f(x)| = |\k(x)| \le L(x) \ll_\d \ep \sup |\bar u|
\end{equation}
Combining (\ref{f-p<dCe}) with (\ref{f-p<Ce2}), we know that (\ref{f-p<dCe}) holds for all $x \le 1 - \d$.

Turning to the other end, we let $R(x) = \sup_{x \le s \le 1 - \ep}|\k(s)|$ for $x \le 1 - \ep$. Similarly we get
\[
R'(x) \ll \ep \sup |\bar u| + R(x)
\]
Gronwall's inequality gives
\begin{equation}\label{RleCdk1}
R(1 - \d) \le C(\d) (\ep \sup|\bar u| + R(1 - \ep)) = C(\d)(\ep \sup|\bar u| + \k(1 - \ep))
\end{equation}
where $C(\d)$ is monotonely increasing in $\d$. Then
\begin{align*}
\k(1 - \d) &= \k(1 - \ep) - \int_{1 - \d}^{1 - \ep} \k'(s)ds \ge \k(1 - \ep) - \d R(1 - \d)\\
&\ge (1 - \d C(\d)) \k(1 - \ep) - C(\d) \ep \sup|\bar u|
\end{align*}
Therefore, we can pick $\d_1$ such that $\d_1 C(\d_1) < 1/2$, which allows us conclude
\begin{equation}\label{k1leCsupf}
\k(1 - \ep) \ll \k(1 - \d_1) + \ep \sup|\bar u| \le L(1 - \d_1) + \ep \sup|\bar u| \ll \ep \sup|\bar u|
\end{equation}
Combining (\ref{RleCdk1}) and (\ref{k1leCsupf}), we get, for all $x \in [1 - \d_1, 1 - \ep]$
\[
|\bar u(x) - \f(x)| = |\k(x)| \le R(1 - \d_1) \ll \ep \sup|\bar u|
\]
so Proposition \ref{f-psi<Ce} holds on $[1 - \d_1, 1 - \ep]$. We can extend its validity to $[0, 1 - \ep]$ by taking $\d = \d_1$ in (\ref{f-p<dCe}). Finally, Combine (\ref{dp<Ce2}) and (\ref{varulexCu}) to cover the whole interval [0, 1].
\end{proof}

\begin{prop}\label{x-s1<Ce}
$(x, \by) \in \Lm$ implies $|x - s_1| \ll \ep$.
\end{prop}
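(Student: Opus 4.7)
The plan is to combine Proposition~\ref{f-psi<Ce} with the pointwise bound (\ref{varulexCu}) to turn a zero of $u$ at $(x,\by)$ into a near-zero of $\f$ at $x$, and then invoke the reverse gradient estimate (\ref{p'<-cp}) to force $x$ within $O(\ep)$ of the unique true zero $s_1$ of $\f$.

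First, I would convert the vanishing of $u$ into a bound on $\f$. Suppose $u(x,\by) = 0$. By the cross-sectional variant of (\ref{varulexCu}) noted right after that inequality, $|\bar u(x) - u(x,\by)| \ll \ep\sup|u|$, so $|\bar u(x)| \ll \ep\sup|u|$. Using (\ref{infule-csupu}) and (\ref{supfgesupu}), $\sup|u|$ and $\sup|\bar u|$ are comparable up to a constant factor, so in fact $|\bar u(x)| \ll \ep\sup|\bar u|$. Applying Proposition~\ref{f-psi<Ce} and the triangle inequality then gives
\[
|\f(x)| \le |\f(x) - \bar u(x)| + |\bar u(x)| \ll \ep\sup|\bar u|.
\]

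Next, I would localize $x$ away from the endpoints. For $\ep$ sufficiently small, the bound $|\bar u(x)| \ll \ep\sup|\bar u| \ll \ep\sup u$ is incompatible with the uniform lower bounds $\bar u(x)\ge \sup u/2$ on $[0,\d_0]$ and $\bar u(x)\le \inf u/2$ on $[1-\d_0,1]$ from (\ref{|s0|>d}). Therefore $x\in[\d_0,1-\d_0]$. On the other hand, (\ref{pd>cp0}) guarantees $s_1\in[\d_1,1-\d_1]$. Letting $\d^* = \min(\d_0,\d_1) > 0$, the entire interval between $x$ and $s_1$ lies in $[\d^*,1-\d^*]$.

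Finally, I would apply the reverse gradient estimate. By (\ref{p'<-cp}), $-\f'(t)\gg_{\d^*}\f(0)$ for every $t\in[\d^*,1-\d^*]$. Since $\f(s_1)=0$, the mean value theorem yields
\[
|\f(x)| = |\f(x) - \f(s_1)| \gg |x - s_1|\,\f(0).
\]
By the normalization $\f(0) = \bar u(0+)$ and (\ref{supfgesupu}), $\f(0)\gg \sup|\bar u|$. Dividing through by $\f(0)$ and combining with the first step gives $|x - s_1|\ll \ep$, as desired.

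The proof is essentially a chain of already-established estimates, and the main obstacle is merely bookkeeping: verifying that the various sup-norms ($\sup|u|$, $\sup u$, $-\inf u$, $\sup|\bar u|$, $\f(0)$) are all mutually comparable so that the $\ep$-smallness survives each substitution, and checking that the $\d^*$ can be chosen uniformly so that the implicit constant in (\ref{p'<-cp}) can be absorbed into the final $\ll$.
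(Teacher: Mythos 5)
Your proof is correct and follows essentially the same chain as the paper's: bound $|\bar u(x)|$ and then $|\f(x)|$ by $O(\ep\sup|\bar u|)$ via (\ref{varulexCu}) and Proposition~\ref{f-psi<Ce}, localize both $x$ and $s_1$ away from the endpoints via (\ref{|s0|>d}) and (\ref{pd>cp0}), and finish with the reverse gradient estimate (\ref{p'<-cp}) together with $\f(0)=\bar u(0+)\gg\sup|\bar u|$ from (\ref{supfgesupu}). The only differences are cosmetic: you make explicit the $\sup|u|$--$\sup|\bar u|$ comparability and the endpoint-avoidance contradiction that the paper leaves implicit.
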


\begin{proof}
Suppose $u(x, \by) = 0$. Then by (\ref{varulexCu}), $|\bar u(x)| \ll \ep \sup |\bar u|$. By Proposition \ref{f-psi<Ce}, $|\f(x)| \le |\bar u(x)| + |\bar u(x) - \f(x)| \ll \ep \sup |\bar u|$. By (\ref{|s0|>d}), $\|x\| > \d_0$. By (\ref{pd>cp0}), $\|s_1\| > \d_1$. Now the implicit constant in (\ref{p'<-cp}) is fixed, and together with (\ref{supfgesupu}) it gives
\[
|x - s_1| \ll \rc{\f(0)} \left| \int_{s_1}^x \f'(s)ds \right| \ll \frac{|\f(x)|}{\f(0)} \ll \frac{\ep \sup |\bar u|}{\bar u(0)} \ll \ep
\]
\end{proof}

For future benefits we prove a ``reversed gradient estimate" (analogous to \ref{p'<-cp}) for $\bar u$.

\begin{lem}
For all $\|x\| \ge \d$,
\begin{equation}\label{f'<-cf}
-\bar u'(x) \gg_\d \sup |\bar u|
\end{equation}
\end{lem}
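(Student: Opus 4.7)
\begin{proof}
The plan is to transfer the reverse gradient estimate (\ref{p'<-cp}) for $\f$ over to $\bar u$ by showing that $\bar u'$ is close to $\f'$ on the middle portion of $[0,1]$.

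Subtracting the first--order form of the two ODEs, namely $\om\bar u' = \y - \l\int_0^x\om\bar u\,ds$ (from (\ref{defy})) and $\om\f' = -\mu\int_0^x\om\f\,ds$, we get, for any $x$,
\[
\om(x)\bigl(\bar u'(x)-\f'(x)\bigr) = \y(x) - \int_0^x(\l-\mu)\om(s)\bar u(s)\,ds - \mu\int_0^x\om(s)\bigl(\bar u(s)-\f(s)\bigr)\,ds.
\]
On $[\ep,1-\ep]$, the three terms on the right are controlled respectively by (\ref{y<epomf}), Proposition \ref{mu<l+Ce}, and Proposition \ref{f-psi<Ce}, each giving a factor of $O(\ep\sup|\bar u|)$ times either $\om(x)$ or $\int_0^x\om\,ds$. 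To compare these, I would use (\ref{omaomb}) with $a=s$, $b=x$, which gives $\om(s)\le\om(x)\bigl((1-s)/(1-x)\bigr)^{n-1}$ for $s<x$, hence
\[
\int_0^x\om(s)\,ds \le \om(x)\int_0^x\left(\frac{1-s}{1-x}\right)^{n-1}ds \ll_\d\om(x)
\]
whenever $1-x\ge\d$. Combining these bounds yields
\[
|\bar u'(x)-\f'(x)| \ll_\d \ep\sup|\bar u|, \qquad \|x\|\ge\d.
\]

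Now by (\ref{supfgesupu}) and (\ref{infule-csupu}) we have $\f(0)=\bar u(0+)\ge(1-O(\ep))\sup u\gg\sup|\bar u|$, so the lower bound (\ref{p'<-cp}) reads $-\f'(x)\gg_\d\sup|\bar u|$ for $\|x\|\ge\d$. Putting the two together,
\[
-\bar u'(x) \ge -\f'(x) - |\bar u'(x)-\f'(x)| \gg_\d \sup|\bar u| - O_\d(\ep\sup|\bar u|) \gg_\d \sup|\bar u|,
\]
provided $\ep$ is smaller than a $\d$--dependent constant, which is within our standing convention.
\end{proof}

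The main subtlety is keeping track that the integral $\int_0^x\om\,ds$ is comparable to $\om(x)$ up to a constant depending on $\d$, which is where the concavity of $\om^{1/(n-1)}$ enters through (\ref{omaomb}); everything else is a direct assembly of the estimates already established.
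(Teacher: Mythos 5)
Your proof is correct, but it takes a genuinely different route from the paper. The paper argues directly from the integral representation $\om\bar u' = \y - \l\int_0^x\om\bar u$, splitting the integral at $\bar u$'s zeros and using (\ref{|s0|>d}), (\ref{y<epomf}), and (\ref{omaomb}) to get a lower bound on $-\om\bar u'$ for $x \in [\d, s_0 + C\ep]$ and, symmetrically, for $x \in [s_0' - C\ep, 1-\d]$; it then invokes Proposition \ref{x-s1<Ce} to show $|s_0' - s_0| \ll \ep$ so that the two intervals overlap and cover $[\d, 1-\d]$. You instead subtract the integral forms of the two ODEs, combine (\ref{y<epomf}), Proposition \ref{mu<l+Ce}, and Proposition \ref{f-psi<Ce} with the $\int_0^x\om\,ds \ll_\d \om(x)$ estimate to conclude $|\bar u'(x) - \f'(x)| \ll_\d \ep\sup|\bar u|$, and transfer the already-proved reverse gradient estimate (\ref{p'<-cp}) for $\f$ over to $\bar u$. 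Both proofs ultimately rely on the same chain of prior results (through Proposition \ref{f-psi<Ce}), but your version is cleaner in two respects: it avoids the interval-merging step, and it packages the argument as a genuine ODE comparison, mirroring the derivative-level analogue of Proposition \ref{f-psi<Ce}. The paper's approach is more self-contained at the estimate level, not needing to combine bounds on $\bar u - \f$ with bounds on $\f'$, and could be adapted even where the $\f$-comparison is less sharp; but for the stated purpose the two are equivalent in strength.
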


\begin{proof}
By (\ref{defy}) we have $\om(x) \bar u'(x) = \y(x) - \l \int_0^x \om(s) \bar u(s)ds$. If $x \in [\d, s_0 + C\ep]$, then by (\ref{y<epomf}), (\ref{|s0|>d}), (\ref{omaomb}), and (\ref{eigenconst}), we have
\begin{align*}
-\om(x) \bar u'(x) &\ge \l \frac{\sup |\bar u|}{2} \int_0^{\d_0 \wed \d} \om(s)ds - O(\ep) \om(x) \sup |\bar u| - \l \sup |\bar u| \int_{s_0}^{s_0 + C\ep} \om(s)ds\\
&\ge \l \sup |\bar u| \left( \rc{2} \int_0^{\d_0 \wed \d} - \int_{s_0}^{s_0 + C\ep} \right) \left( \frac{s}{x} \right)^{n-1} ds - O(\ep) \om(x) \sup |\bar u|\\
&\ge \om(x) \sup |\bar u| (c_\d - C\ep - O(\ep)) \gg_{\d,C} \om(x) \sup |\bar u|
\end{align*}

Symmetrically, if we let $s_0'$ be the largest zero of $\bar u$, then we have $-\bar u'(x) \gg_{\d,C} \sup |\bar u|$ if $x \in [s_0' - C\ep, 1 - \d]$. Since $\bar u(s_0) = \bar u(s_0') = 0$, there are points $(s_0, \by)$, $(s_0', \by') \in \Lm$. By Proposition \ref{x-s1<Ce} we have $|s_0' - s_0| \ll \ep$, so there is a constant $C_0$ such that $s_0' - C_0\ep \le s_0 + C_0\ep$. Then the dependence on $C$ can be dropped and (\ref{f'<-cf}) holds for $x \in [\d, s_0 + C_0\ep] \cup [s_0' - C_0\ep, \d] = [\d, 1 - \d]$.
\end{proof}

\section{Proof of Theorem \ref{main}, Part (d)}
In this section we assume $n = 2$. In doing so we can have better estimates of $\de_y u$ using the generalized maximum principle, refining (\ref{grleu}).

\begin{lem}
For all $(x, y) \in \de \O$
\begin{equation}\label{gryleepbd}
|\de_y u(x, y)| \ll \ep \sup |\bar u|
\end{equation}
\end{lem}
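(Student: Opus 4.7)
My plan is to split $\de\O$ into two regimes according to $\|x\|$ and handle each with a different estimate already in hand. Before starting, observe that $\sup|u| \ll \sup|\bar u|$ follows from (\ref{supfgesupu}) combined with (\ref{infule-csupu}), so it suffices to prove $|\de_y u| \ll \ep \sup|u|$ on $\de\O$.

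In the regime $\|x\| < \ep$, I would not use the boundary condition at all. The gradient estimate (\ref{grleu}) directly yields
\[
|\de_y u(x, y)| \le |\gr u(x, y)| \ll \max(\|x\|, \ep) \sup|u| = \ep \sup|u|.
\]

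In the regime $\|x\| \ge \ep$, I would use the Neumann condition. Since $n=2$ and $\O$ is smooth and convex, $\de\O$ decomposes, away from the two extreme points at $x = 0$ and $x = 1$, as the union of a concave upper graph $y = y_+(x)$ and a convex lower graph $y = y_-(x)$, both defined on $(0,1)$ with values in $[-\ep, \ep]$. On the upper curve the outward unit normal is parallel to $(-y_+'(x), 1)$, so $u_\nu = 0$ rewrites as $\de_y u = y_+'(x) \de_x u$, and analogously $\de_y u = y_-'(x) \de_x u$ on the lower curve. It remains to bound $|y_\pm'|$: concavity of $y_+$ on $[0,1]$ together with $|y_+| \le \ep$ gives
\[
\frac{y_+(1) - y_+(x)}{1-x} \le y_+'(x) \le \frac{y_+(x) - y_+(0)}{x},
\]
so $|y_+'(x)| \le 2\ep/\|x\|$, and the same bound for $|y_-'|$ follows by applying the argument to $-y_-$. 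Combining with $|\de_x u| \ll \|x\| \sup|u|$, which (\ref{grleu}) supplies in this regime, yields
\[
|\de_y u| \le |y_\pm'(x)|\, |\de_x u| \ll \frac{\ep}{\|x\|} \cdot \|x\| \sup|u| = \ep \sup|u|,
\]
as required.

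The one point to be careful about is at the extreme points $x = 0, 1$, where the boundary may be (nearly) vertical so that $|y_\pm'|$ blows up and the Neumann-based argument degrades; but precisely those points lie in the regime $\|x\| \le \ep$, where the direct gradient bound already delivers an $O(\ep)$ estimate on $|\gr u|$ and therefore on $|\de_y u|$. The two regimes overlap at $\|x\| = \ep$, where both estimates remain valid, so the split is clean and no single bound needs to do all the work.
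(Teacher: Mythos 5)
Your proof is correct and takes essentially the same approach as the paper: split $\de\O$ into the regimes $\|x\| \le \ep$ (where the gradient bound (\ref{grleu}) gives the conclusion directly) and $\|x\| \ge \ep$ (where the Neumann condition relates $\de_y u$ to $\de_x u$ through the boundary slope, which is bounded by $O(\ep/\|x\|)$ via convexity). The paper phrases the slope bound in terms of the tangent line at $p$ being sandwiched between the secants to points at $x=0$ and $x=1$, whereas you phrase it via the graph representation and concavity/convexity of $y_\pm$, but these are the same estimate.
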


\begin{proof}
When $\|x\| \le \ep$, (\ref{gryleepbd}) follows from (\ref{grleu}) and $|\de_y u(x, y)| \le |\gr u(x, y)|$, so we assume $\|x\| > \ep$. Let $p = (x, y) \in \de \O$, $p_0 = (0, y_0) \in \de \O \cap \{x = 0\}$, $p_1 = (1, y_1) \in \de \O \cap \{x = 1\}$. Let $l$ be a line tangent to $\conj \O$ at $p$ ($l$ need not be unique if $p$ happens to be a corner.) Since $\O$ is convex, $p_0$ and $p_1$ must lie on the same side of $l$. WLOG suppose they lie below $l$. Then its slope
\[
k \in \left[\frac{y_1 - y}{x_1 - x}, \frac{y - y_0}{x - x_0} \right] \sub \left[ \frac{-2\ep}{\|x\|}, \frac{2\ep}{\|x\|} \right]
\]
Since $\de_n u(x, y) = 0$, $\gr u(x, y) \para l$. Therefore by (\ref{grleu}),
\[
|\de_y u(x, y)| = k|\de_x u(x, y)| \ll \frac{\ep}{\|x\|} |\gr u(x, y)| \ll \ep \sup |\bar u|
\]

\end{proof}

Now we pass from the boundary to the interior:

\begin{lem}\label{gryleep}
For all $(x, y) \in \O$, $|\de_y u(x, y)| \ll \ep \sup |\bar u|$
\end{lem}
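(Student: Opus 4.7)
The plan is to observe that $v := \de_y u$ solves the same eigenvalue equation as $u$, namely $\Dl v + \l v = 0$ in $\O$, since $\de_y$ commutes with $\Dl$. By the previous lemma we already control $v$ on $\de \O$ by $\ll \ep \sup|\bar u|$, so the task is to propagate this bound to the interior. The immediate obstacle is that the standard maximum principle does not apply to $\Dl v + \l v = 0$ with $\l > 0$: an interior extremum of $v$ is perfectly consistent with $\Dl v = -\l v$ having the ``wrong'' sign. This is the main difficulty, and the way around it is the generalized maximum principle via a positive barrier.

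I would choose the barrier $w(y) = \cos(\sqrt{\l}\, y)$, which depends only on the transverse variable. Because $\O \sub [0,1] \times (-\ep, \ep)$ in two dimensions and $\l$ is uniformly bounded by (\ref{eigenconst}), we have $\sqrt{\l}\, \ep \ll 1$, so $w$ is strictly positive on $\conj \O$ with
\[
\cos(\sqrt{\l}\, \ep) \le w \le 1, \qquad \sup_{\conj \O} w \big/ \inf_{\conj \O} w = 1 + O(\ep^2).
\]
A direct computation gives $\Dl w + \l w = -\l w + \l w = 0$, so $v$ and $w$ solve the same equation.

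Setting $g := v/w$, the product rule identity $w \Dl g + 2\gr w \cdot \gr g = \Dl(wg) - g \Dl w = \Dl v - (v/w)\Dl w$ simplifies, using $\Dl v = -\l v$ and $\Dl w = -\l w$, to
\[
w \Dl g + 2 \gr w \cdot \gr g = 0
\]
on $\O$. Since this linear elliptic equation has no zeroth-order term, $g$ obeys the classical weak maximum principle and attains its supremum and infimum on $\de \O$. Consequently, for every $p \in \O$,
\[
|v(p)| \le w(p) \sup_{\de \O} \frac{|v|}{w} \le \frac{\sup_{\de \O} |v|}{\cos(\sqrt{\l}\,\ep)} \le (1 + O(\ep^2)) \sup_{\de \O}|\de_y u|.
\]
Inserting the boundary bound (\ref{gryleepbd}) yields $|\de_y u(x,y)| \ll \ep \sup|\bar u|$ throughout $\O$, as desired. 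Routine verification that $u$ (and hence $\de_y u$) is smooth up to $\de \O$, needed to justify applying the maximum principle, follows from the smoothness of $\O$ and standard elliptic regularity.
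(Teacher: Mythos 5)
Your proof is correct and follows the same strategy as the paper: control $\de_y u$ on the boundary (the previous lemma) and then propagate to the interior by dividing by a positive barrier and invoking a maximum principle. The only difference is the barrier: the paper uses the strict supersolution $w(x,y) = 1 + \cos(y/\ep)$, for which $(\Dl + \l)w < 0$, and cites the generalized maximum principle of Protter--Weinberger, whereas you take $w(y) = \cos(\sqrt{\l}\,y)$, which solves $(\Dl + \l)w = 0$ exactly, so that $g = \de_y u / w$ satisfies a drift equation with no zeroth-order term and the classical weak maximum principle applies directly; both barriers are valid and yield the same conclusion.
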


\begin{proof}
Since $\Dl u = -\l u$, $\Dl \de_y u = -\l \de_y u$. We let $w(x, y) = 1 + \cos(y/\ep)$. Since $|y/\ep| \le 1$, $w(x, y) > 0$ on $\conj \O$. Moreover, $(\Dl + \l)w = 1 + (\l - 1/\ep^2) \cos(y/\ep) \le 1 + (\l - 1/\ep^2) \cos 1 < 0$ if $\ep$ is small enough. By the generalized maximum principle (\cite{Protter}, Chapter 2, Section 5, Theorem 11) and (\ref{gryleepbd}) we have
\[
\max_{\conj \O} |\de_y u(x, y)/w(x, y)| = \max_{\de \O} |\de_y u(x, y)/w(x, y)| \ll \ep \sup |\bar u|
\]
Since $w(x, y) \le 2$, $|\de_y u(x, y)| \ll \ep \sup |\bar u|$ in $\O$.
\end{proof}

\begin{cor}
For every pair of points $(x, y_1), (x, y_2) \in \O$,
\begin{equation}\label{secvarep2}
|u(x, y_1) - u(x, y_2)| \ll \ep^2 \sup |\bar u|
\end{equation}
\end{cor}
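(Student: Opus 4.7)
The plan is to derive the corollary as an immediate consequence of Lemma \ref{gryleep} by integrating $\de_y u$ along the vertical segment connecting the two points.

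First I would note that since $(x,y_1), (x,y_2) \in \O(x) \sub B^1(\ep) = (-\ep, \ep)$, the two $y$-coordinates lie in an interval of length at most $2\ep$, so $|y_1 - y_2| \ll \ep$. Moreover, since $\O(x)$ is convex (it is an interval in the case $n = 2$), the entire segment from $(x, y_1)$ to $(x, y_2)$ lies in $\O$, so we may legitimately integrate $\de_y u$ along it.

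Then I would apply the fundamental theorem of calculus:
\[
u(x, y_1) - u(x, y_2) = \int_{y_2}^{y_1} \de_y u(x, y)\,dy,
\]
so by Lemma \ref{gryleep},
\[
|u(x, y_1) - u(x, y_2)| \le |y_1 - y_2|\sup_{y} |\de_y u(x, y)| \ll \ep \cdot \ep \sup|\bar u| = \ep^2 \sup|\bar u|.
\]

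There is no real obstacle here; the work has already been done in Lemma \ref{gryleep}, and this corollary is a one-line consequence. The essential improvement over (\ref{varulexCu}) restricted to the transverse direction is the extra factor of $\ep$, which comes from the boundary estimate (\ref{gryleepbd}) (based on the slope bound $|k| \ll \ep/\|x\|$ and the Neumann condition) rather than from the dimensions of $\O(x)$ alone.
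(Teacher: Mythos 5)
Your proof is correct and is exactly the argument the paper leaves implicit: the corollary is a direct consequence of Lemma \ref{gryleep} by integrating $\de_y u$ along the vertical segment (which lies in $\O$ by convexity) and using $|y_1 - y_2| \ll \ep$.
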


\begin{prop}\label{widthep2}
When $n = 2$, $(x, y) \in \Lm$ implies $|x - s_0| \ll \ep^2$
\end{prop}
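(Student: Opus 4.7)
The plan is to combine the refined transverse variation estimate (\ref{secvarep2}) with the reverse gradient estimate (\ref{f'<-cf}) for $\bar u$. Suppose $(x, y) \in \Lm$, so $u(x, y) = 0$. For any other point $(x, y') \in \O(x)$, inequality (\ref{secvarep2}) gives $|u(x, y')| = |u(x, y') - u(x, y)| \ll \ep^2 \sup |\bar u|$. Averaging this over $y' \in \O(x)$ I immediately obtain
\[
|\bar u(x)| \ll \ep^2 \sup |\bar u|.
\]
This is the key improvement over the one-dimensional estimate from Section 2, which only yielded $|\bar u(x)| \ll \ep \sup |\bar u|$ via (\ref{varulexCu}).

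Next I convert this into a bound on $|x - s_0|$. Since $\bar u(s_0) = 0$, the mean value theorem gives some $\xi$ between $x$ and $s_0$ with $\bar u(x) = (x - s_0) \bar u'(\xi)$. To apply (\ref{f'<-cf}) at $\xi$ I need $\|\xi\| \ge \d$ for some fixed $\d > 0$. I know $s_0 \in [\d_0, 1 - \d_0]$ from (\ref{|s0|>d}), and by Proposition \ref{x-s1<Ce} we have $|x - s_1| \ll \ep$. The remaining step is to confirm $|s_0 - s_1| \ll \ep$: from Proposition \ref{f-psi<Ce} together with $\f(s_1) = 0$ we get $|\bar u(s_1)| \ll \ep \sup |\bar u|$, so by (\ref{f'<-cf}) applied on the interval between $s_0$ and $s_1$ (both of which sit in $[\min(\d_0, \d_1), 1 - \min(\d_0, \d_1)]$) the mean value theorem forces $|s_0 - s_1| \ll \ep$. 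Hence $x$ and $s_0$ differ by $O(\ep)$, and for $\ep$ small enough any intermediate $\xi$ lies in $[\d_0/2, 1 - \d_0/2]$.

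Applying (\ref{f'<-cf}) at $\xi$ with, say, $\d = \d_0/2$, I obtain $|\bar u'(\xi)| \gg \sup |\bar u|$, and combining with the bound on $\bar u(x)$ yields
\[
|x - s_0| = \frac{|\bar u(x)|}{|\bar u'(\xi)|} \ll \frac{\ep^2 \sup |\bar u|}{\sup |\bar u|} = O(\ep^2),
\]
as claimed. The main obstacle in this argument is really just the bookkeeping needed to keep $\xi$ away from the endpoints so that (\ref{f'<-cf}) is available; all the analytic content has been packaged into the maximum principle argument culminating in (\ref{secvarep2}) and the reverse gradient estimate from the previous section.
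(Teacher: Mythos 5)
Your argument is correct and follows essentially the same route as the paper: use (\ref{secvarep2}) to get $|\bar u(x)|\ll\ep^2\sup|\bar u|$, then invert the reverse gradient estimate (\ref{f'<-cf}) on the interval between $x$ and $s_0$. The extra bookkeeping you supply (confirming $x$ and $s_0$ both lie away from the endpoints via (\ref{|s0|>d}), Proposition \ref{x-s1<Ce}, and the comparison $|s_0-s_1|\ll\ep$) is left implicit in the paper's one-line computation but is exactly what justifies applying (\ref{f'<-cf}) there.
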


\begin{proof}
Suppose $u(x, y) = 0$. By (\ref{secvarep2}), $|u(x, y_1)| \ll \ep^2 \sup |\bar u|$, so $|\bar u(x)| \ll \ep^2 \sup |\bar u|$, and by (\ref{f'<-cf}),
\[
|x - s_0| < \rc{c_{\d_0} \sup |\bar u|} \left| \int_{s_0}^x \bar u(s)ds \right| \ll \frac{|\bar u(x)|}{\sup |\bar u|} \ll \ep^2
\]
\end{proof}

\section{Appendix A  Well-posedness of the Neumann Problem (\ref{ODE})}
This section concerns the well-posedness of the Neumann eigenfunction problem of (\ref{ODE}). First we prove an existence and uniqueness result of (\ref{ODE}) with Neumann boundary condition at its (possibly) singular endpoints. Then we show the monotocity of the zero of the solution with respect to the parameter $\mu$. Finally we piece together two solutions into an eigenfunction with Neumann boundary conditions on both ends.

\begin{lem}\label{ODExist}
Suppose $\om$ is continuous on $[0, 1]$, $\om$ positive on $(0, 1)$. Suppose either of the following is true:

\indent (a) $\om(0) > 0$.

\indent (b) $\om(0) = 0$ but $\om(x)$ is increasing in $x$ for some short interval $[0, \d]$.

Then (\ref{ODE}) has a unique solution $\f \in C^1[0, 1)$ such that $\f(0) = 1$ and $\f'(0) = 0$. Moreover, this solution is continuous with respect to $\mu$.
\end{lem}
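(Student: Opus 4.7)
The plan is to rewrite (\ref{ODE}) with data $\f(0)=1$, $\f'(0)=0$ as a fixed-point equation and run a Picard iteration on a short interval $[0,a]$. Integrating $(\om\f')' = -\mu\om\f$ from $0$ to $x$ and using that $\om(0)\f'(0) = 0$ in both cases gives $\om(x)\f'(x) = -\mu\int_0^x \om(s)\f(s)\,ds$; integrating once more yields
\[
\f(x) = 1 - \mu\int_0^x \frac{1}{\om(t)}\int_0^t \om(s)\f(s)\,ds\,dt =: (T_\mu \f)(x).
\]
A fixed point of $T_\mu$ in $C[0,a]$ will give the desired local solution.

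To apply the Banach fixed-point theorem on $C[0,a]$ with the sup norm, the key estimate is
\[
\|T_\mu \f - T_\mu \psi\|_\infty \le \mu\, \|\f - \psi\|_\infty \cdot \sup_{x\in[0,a]} \int_0^x \frac{1}{\om(t)}\int_0^t \om(s)\,ds\,dt.
\]
In case (a), continuity with $\om(0)>0$ gives uniform upper and lower bounds for $\om$ on $[0,a]$ for small $a$, so the double integral is $O(a^2)$. In case (b), the monotonicity of $\om$ on $[0,\d]$ gives $\int_0^t \om(s)\,ds \le t\,\om(t)$, so that $\om(t)^{-1}\int_0^t \om(s)\,ds \le t$, and again the double integral is at most $a^2/2$. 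Taking $a$ small enough makes $T_\mu$ a contraction, yielding a unique fixed point $\f \in C[0,a]$. Differentiating the integral equation recovers $\f'(x) = -\mu\,\om(x)\inv\int_0^x \om(s)\f(s)\,ds$; the same bounds show $|\f'(x)| \ll x\sup_{[0,a]}|\f|$ as $x \to 0^+$, so in fact $\f \in C^1[0,a]$ with $\f'(0)=0$. Extension to $[0,1)$ is then standard: on any compact subinterval of $(0,1)$, $\om$ is bounded above and below, so (\ref{ODE}) is a regular linear second-order ODE and the local solution continues uniquely.

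Continuity in $\mu$ will come out of the same iteration: each iterate $\f_n^\mu$ is continuous in $\mu$ by induction starting from $\f_0 \equiv 1$, the contraction constant is uniform on compact $\mu$-intervals, so $\f_n^\mu \to \f^\mu$ uniformly in $\mu$ and the limit is continuous; on $[a,1)$, continuity in $\mu$ follows from standard linear ODE dependence on parameters. The main obstacle is case (b), where the factor $1/\om(t)$ is \emph{a priori} singular at $0$, and it is precisely the monotonicity hypothesis that makes $\om(t)\inv\int_0^t \om(s)\,ds \le t$. That single inequality simultaneously makes the contraction work for $T_\mu$, forces the fixed point to satisfy $\f'(0)=0$, and shows that $\f'$ is continuous at $0$, so the entire argument reduces to exploiting this one tame control on the singular factor.
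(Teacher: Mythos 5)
Your proposal is correct and follows essentially the same route as the paper: the paper also runs a Picard iteration near $0$, keeping $\phi$ and $\phi_1=\phi'$ as a coupled system of two integral equations rather than substituting to get the single operator $T_\mu$, but the underlying mechanism, and in particular the decisive inequality $\om(t)^{-1}\int_0^t\om(s)\,ds \le t$ coming from monotonicity in case (b), is identical. The argument for continuity in $\mu$ (uniform convergence of iterates continuous in $\mu$) and the extension to $[0,1)$ via regularity of the ODE away from the endpoints also match the paper's treatment.
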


\begin{proof}
Case (a) follows from the usual existence and uniqueness theorem. We now use Picard's iteration to prove Case (b). Consider a systen of integral equations relating $\f$ and $\f_1$:
\[
\begin{cases}
\f(x) = 1 + \int_0^x \f_1(s)ds\\
\f_1(x) = -\rc{\om(x)} \int_0^x \mu \om(s) \f(s)ds
\end{cases}
\]

Suppose $\f^0(x) = 1$, $\f_1^0(x) = 0$, and inductively define
\[
\begin{cases}
\f^{k+1}(x) = 1 + \int_0^x \f_1^k(s)ds\\
\f_1^{k+1}(x) = -\rc{\om(x)} \int_0^x \mu \om(s) \f^k(s)ds
\end{cases}
\]
for all $k \in \N$. Then we have
\[
\begin{cases}
|\f^{k+2}(x) - \f^{k+1}(x)| = \left| \int_0^x (\f_1^{k+1}(s) - \f_1^k(s))ds \right| \le x \sup_{s \le x} |\f_1^{k+1}(s) - \f_1^k(s)|\\
|\f_1^{k+2}(x) - \f_1^{k+1}(x)| = \rc{\om(x)} \left| \int_0^x (\f^{k+1}(s) - \f^k(s))ds \right| \le \mu x \sup_{s \le x} |\f^{k+1}(s) - \f^k(s)|
\end{cases}
\]
Thus
\[
\begin{cases}
\sup_{s \le x} |\f^{k+3}(x) - \f^{k+2}(x)| \le \mu x^2 \sup_{s \le x} |\f^{k+1}(x) - \f^{k}(x)|\\
\sup_{s \le x} |\f_1^{k+3}(x) - \f_1^{k+2}(x)| \le \mu x^2 \sup_{s \le x} |\f_1^{k+1}(x) - \f_1^{k}(x)|
\end{cases}
\]
Therefore $\f^k(x)$ and $\f_1^k(x)$ converge uniformly to $\f(x)$ and $\f_1(x)$ on $[0, \d']$, where $0 < \d' < \min(\d, \sqrt \mu)$, so
\[
\begin{cases}
\f(x) = 1 + \int_0^x \f_1(s)ds\\
\f_1(x) = -\rc{\om(x)} \int_0^x \mu \om(s) \f(s)ds
\end{cases}
\]
Hence $\f(0) = 1$, $\f'(x) = \f_1(x)$, $\f'(0) = \lim_{x \to 0} \f_1(x) = 0$ and $(\om(x) \f_1(x))' = -\mu \om(x) \f(x)$, so $\f \in C^1[0, \d']$ and solves (\ref{ODE}) with the Neumann boundary condition at 0.

Now we turn to uniqueness. we need only to prove it for the zero initial data, namely $\f(0) = \f'(0) = 0$. Suppose we have a solution $\f \in C^1[0, \d']$ with zero initial data. Integrating (\ref{ODE}) from $s = 0$ to $x$ gives: for all $x \in [0, \d']$
\begin{equation}\label{p'leCxp}
|\f'(x)| = \rc{\om(x)} \left| \int_0^x \mu \om(s) \f(s)ds \right| \le \mu \int_0^x |\f(s)|ds \le \mu\ x \sup_{s \le x} |\f(s)|
\end{equation}
Therefore
\[
|\f(x)| = |\int_0^x \f'(s)ds| \le \mu x^2 \sup_{s \le x} |\f(s)|
\]
so
\[
\sup_{s \le x} |\f(s)| < \mu x^2 \sup_{s \le x} |\f(s)|
\]
Since $\mu x^2 < 1$, $\sup_{s \le x} |\f(s)| = 0$, so $\f = 0$ on $[0, x]$. Letting $x$ range through $[0, \d']$ we get $\f = 0$ on $[0, \d']$, and the solution is unique on $[0, \d']$.

Since $\f^k(x)$ is continuous with respect to $\mu$ by way of construction and $\f^k(x) \to \f(x)$ uniformly on $[0, \d']$, $\f(x)$ is continuous with respect to $\mu$ when $x \in [0, \d']$.

Finally, since the ODE is regular away from 0 and 1, the usual existence and uniqueness theorem then extends the interval to $[0, 1]$.
\end{proof}

\begin{lem}\label{s1decmu}
Supppose $\om$ is as in Lemma \ref{ODExist}. Then the smallest zero $s_1$ of the solution $\f$ to (\ref{ODE}) satisfying the Neumann boundary condition at 0 is a decreasing function of $\mu$.
\end{lem}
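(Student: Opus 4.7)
The plan is to apply a Sturm-type comparison argument. Given $\mu_1 < \mu_2$, I would let $\f_1, \f_2$ denote the corresponding solutions produced by Lemma \ref{ODExist} with the normalization $\f_i(0) = 1$, $\f_i'(0) = 0$, and denote their smallest zeros in $(0,1)$ by $s_1^{(1)}$ and $s_1^{(2)}$ (interpreted as $+\infty$ if no zero exists). The goal is to show $s_1^{(2)} < s_1^{(1)}$, so I may assume $s_1^{(1)} \in (0,1)$.

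First I would derive a weighted Wronskian identity: multiplying the ODE $-(\om \f_1')' = \mu_1 \om \f_1$ by $\f_2$, multiplying the corresponding equation for $\f_2$ by $\f_1$, and subtracting gives
\[
\frac{d}{dx}\bigl[\om (\f_2' \f_1 - \f_1' \f_2)\bigr] = (\mu_1 - \mu_2)\, \om \f_1 \f_2.
\]
Setting $W(x) := \om(x)\bigl(\f_2'(x)\f_1(x) - \f_1'(x)\f_2(x)\bigr)$, I would integrate this identity over $[0, s_1^{(1)}]$.

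Next I would evaluate the boundary terms. At $x = 0$, regardless of whether $\om(0) > 0$ or $\om(0) = 0$, integrating the ODE from $0$ yields $\om(x)\f_i'(x) = -\mu_i \int_0^x \om \f_i \, ds \to 0$ as $x \to 0^+$, and combined with $\f_i(0) = 1$ this forces $W(0) = 0$. At $x = s_1^{(1)}$, since $\f_1(s_1^{(1)}) = 0$ we have $W(s_1^{(1)}) = -\om(s_1^{(1)}) \f_1'(s_1^{(1)}) \f_2(s_1^{(1)})$; by the uniqueness half of Lemma \ref{ODExist} the derivative $\f_1'(s_1^{(1)})$ cannot vanish (else $\f_1 \equiv 0$), and since $\f_1 > 0$ on $[0, s_1^{(1)})$ we must have $\f_1'(s_1^{(1)}) < 0$, while $\om(s_1^{(1)}) > 0$ by assumption. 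Now suppose for contradiction that $s_1^{(2)} \ge s_1^{(1)}$, so $\f_2 > 0$ throughout $[0, s_1^{(1)}]$. Then $W(s_1^{(1)}) > 0$, whereas $\int_0^{s_1^{(1)}} (\mu_1 - \mu_2) \om \f_1 \f_2 \, dx < 0$ strictly. This contradicts the integrated identity, forcing $s_1^{(2)} < s_1^{(1)}$.

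The main technical point to nail down is the boundary value $W(0) = 0$ in case (b) of Lemma \ref{ODExist}, where the vanishing of $\om$ at $0$ creates an apparent singularity. Fortunately the integral representation $\om \f_i' = -\mu_i \int_0^{\cdot} \om \f_i$ built into the Picard construction already encodes the Neumann condition and guarantees $\om \f_i' \to 0$ at the endpoint, so the weighted Wronskian vanishes there. Beyond this the argument is a textbook Sturm--Liouville comparison, only lightly adapted to accommodate the weight $\om$ appearing on both sides of (\ref{ODE}).
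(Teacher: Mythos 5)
Your proof is correct, and it takes a genuinely different route from the paper's. The paper argues by contradiction at a first crossing point $x_0 < s_1$ (after normalizing $\f_1(0) = \f_2(0)$): it invokes the generalized maximum principle from Protter (with the degenerate operator $Lu = (\om u')'$) to force $\f_2 \le \f_1$ on $[0,x_0]$, then bootstraps through the ODE --- $\mu_2\f_2 \le \mu_1\f_1$ gives $(\om\f_2')' \ge (\om\f_1')'$, hence $\f_2' \ge \f_1'$, hence $\f_2 \ge \f_1$ --- and finally uses uniqueness to close the contradiction. You instead run the classical Sturm comparison through the weighted Wronskian $W = \om(\f_2'\f_1 - \f_1'\f_2)$ and the identity $W' = (\mu_1-\mu_2)\om\f_1\f_2$, integrating over $[0,s_1^{(1)}]$ and playing the sign of the boundary term against the sign of the integrand. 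Your argument is more elementary and self-contained: it does not lean on an external maximum-principle theorem, and the only adaptation it needs for the degenerate weight is the verification that $W(0)=0$, which you correctly extract from the Picard integral representation $\om\f_i' = -\mu_i\int_0^{\cdot}\om\f_i$. The paper's approach, in context, costs nothing extra since the same Protter machinery is cited again in Lemma \ref{s1oo0}. Two minor remarks. First, the nonvanishing of $\f_1'(s_1^{(1)})$ should be attributed to the standard uniqueness theorem for the \emph{regular} ODE at the interior point $s_1^{(1)}$ (where $\om>0$), rather than to the uniqueness half of Lemma \ref{ODExist}, which concerns the possibly singular endpoint $x=0$; the fact itself is of course correct. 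Second, your argument delivers strict monotonicity essentially for free --- even in the borderline case $s_1^{(2)} = s_1^{(1)}$ one has $W(s_1^{(1)}) = 0$ while $\int_0^{s_1^{(1)}}(\mu_1-\mu_2)\om\f_1\f_2\,dx$ is strictly negative --- which is a touch sharper than what the paper's wording literally records.
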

\begin{proof}
Suppose $\f_i$ solves (\ref{ODE}) with eigenvalue $\mu_i$, together with the Neumann boundary condition at 0. Suppose $\mu_1 > \mu_2$ and $s_1$ is the smallest zero of $\f_1$. Moreover, we normalize $\f_i$ so that $\f_1(0) = \f_2(0)$. We need only to show that $\f_2 > 0$ on $[0, s_1)$.

Suppose not, then there is $x_0 < s_1$ such that $\f_1(x_0) = \f_2(x_0)$, and $\f_1 > 0$ on $[0, x]$. Now the generalized maximum principle (\cite{Protter}, Chapter 1, Section 2, Theorem 5, with $Lu = (\om u')'$, $g(x) = \om'(x)/\om(x)$ bounded below and $h(x) = \mu_1$) applies to show that $\f_2(x) \le \f_1(x)$ on $[0, x_0]$. However, this implies that $\mu_2\f_2(x) \le \mu_1\f_1(x)$ on $[0, x_0]$, so that $(\om(x) \f_2'(x))' \ge (\om(x) \f_1'(x))'$ on $[0, x_0]$. An intergration gives $\om(x) \f_2'(x) \ge \om(x) \f_1'(x)$ on $[0, x_0]$ since $\f_1'(0) = \f_2'(0) = 0$. Since $\om(x) > 0$ when $x > 0$, we know that $\f_2'(x) \ge \f_1'(x)$ on $[0, x_0]$, which in turn implies that $\f_2(x) \ge \f_1(x)$ on $[0, x_0]$. Therefore $\f_2'(x) = \f_1'(x)$ on $[0, x_0]$, but this contradicts the uniqueness proved in Lemma \ref{ODExist}.
\end{proof}

\begin{lem}\label{s1oo0}
Supppose $\om$ is as in Lemma \ref{ODExist}. $s_1(\mu)$ is the smallest zero of the solution to (\ref{ODE}) with $\f(0) = 1$, $\f'(0) = 0$. Then $s_1(0) = +\oo$, $s_1(\mu) \to 0$ as $\mu \to +\oo$.
\end{lem}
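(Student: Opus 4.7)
The plan is to split into the two assertions, both of which are elementary once the machinery of Lemma \ref{ODExist} and Sturm's comparison theorem is in place.

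For $s_1(0) = +\oo$: setting $\mu = 0$ in the integrated form of the ODE (the Picard set-up used in Lemma \ref{ODExist}) yields $\om(x)\f'(x) = -\int_0^x 0\cdot\om(s)\f(s)\,ds = 0$, so $\f'(x) \equiv 0$ on $(0, 1)$ since $\om > 0$ there; combined with $\f(0) = 1$ this forces $\f \equiv 1$, which has no zero, hence $s_1(0) = +\oo$.

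For $s_1(\mu) \to 0$ as $\mu \to +\oo$, I would apply Sturm's comparison theorem on a compact subinterval strictly inside $(0, 1)$. Fix any $\d \in (0, 1)$; it suffices to show $s_1(\mu) < \d$ for all sufficiently large $\mu$. Pick $0 < \a < \b < \d$. On $[\a, \b]$, continuity and positivity of $\om$ yield constants $0 < m \le \om(x) \le M < \oo$. Take the test function $\psi(x) = \sin\bigl(\pi(x - \a)/(\b - \a)\bigr)$, which solves the self-adjoint equation $(M\psi')' + \tfrac{\pi^2 M}{(\b - \a)^2}\psi = 0$ and has consecutive zeros precisely at $\a$ and $\b$. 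Apply Sturm's comparison theorem with $\psi$ having coefficient pair $(p_1, q_1) = \bigl(M,\, \pi^2 M/(\b - \a)^2\bigr)$ and $\f$ having $(p_2, q_2) = (\om,\, \mu\om)$: the hypothesis $p_1 \ge p_2$ holds on $[\a, \b]$, and $q_1 \le q_2$ holds there as soon as $\mu \ge \pi^2 M / \bigl(m(\b - \a)^2\bigr)$. Sturm then forces $\f$ to vanish somewhere in $(\a, \b) \subset (0, \d)$, giving $s_1(\mu) < \d$. Since $\d \in (0, 1)$ was arbitrary, $s_1(\mu) \to 0$.

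The main potential obstacle is the possible degeneracy of $\om$ at the endpoint $0$ (case (b) of Lemma \ref{ODExist}), which could complicate a ``global'' comparison on $[0, s_1(\mu)]$. This is circumvented by performing the comparison on the compact subinterval $[\a, \b]$ strictly interior to $(0, 1)$, where both coefficients are bounded and bounded away from zero, so the classical Sturm theorem applies verbatim. The monotonicity of $s_1$ in $\mu$ established in Lemma \ref{s1decmu} is not strictly needed here, but it does guarantee the limit exists a priori, which could simplify the write-up if desired.
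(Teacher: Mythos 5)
Your proof is correct but takes a genuinely different route from the paper's. The paper argues by contradiction: assuming $\d := \inf_\mu s_1(\mu) > 0$, it introduces an auxiliary solution $\tld\f$ with Neumann data at $\d/2$, invokes an oscillation result of Protter (Chapter 1, Section 7, Lemma 1) to produce a zero $\tld s \in (\d/2, \d)$ of $\tld\f$ for some finite $\mu_0$, and then applies the generalized maximum principle to the ratio $\tld\f/\f$ on $[\d/2, \tld s]$, ruling out an interior maximum by the maximum principle, a maximum at $\d/2$ because $(\tld\f/\f)'(\d/2) > 0$, and a maximum at $\tld s$ because the ratio vanishes there --- an impossibility on a compact interval. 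You replace all of this with a direct Sturm--Picone comparison on a compact interior subinterval $[\a,\b] \subset (0,\d)$ against the explicit test function $\sin\bigl(\pi(x-\a)/(\b-\a)\bigr)$, exploiting that the equation is regular there with $\om$ pinched between positive constants $m$ and $M$. Your coefficient bookkeeping is correct: with $p_1 = M \ge \om = p_2$ and $q_1 = \pi^2 M/(\b-\a)^2 \le \mu\om = q_2$ once $\mu \ge \pi^2 M/\bigl(m(\b-\a)^2\bigr)$, the theorem forces a zero of $\f$ in $[\a,\b]$, so $s_1(\mu) \le \b < \d$. This is more elementary and self-contained, avoiding both the Protter citation and the ratio-maximum-principle step, at the small cost of writing out the explicit comparison coefficients; the paper's choice has the stylistic advantage of reusing the generalized maximum principle already employed in Lemma \ref{s1decmu}. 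Your treatment of $s_1(0) = +\oo$ (at $\mu = 0$ the integral form of the equation gives $\f' \equiv 0$, so $\f \equiv 1$) correctly fills in what the paper dismisses as obvious, and your observation that the monotonicity of Lemma \ref{s1decmu} is not logically needed here is also accurate.
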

\begin{proof}
The first claim is obvious. We now show that $\d := \inf_{\mu \to +\oo} s_1(\mu) = 0$. Suppose not, then $\f(x) > 0$, $\all x < \d$, $\mu > 0$. Consider the solution $\tld \f$ of (\ref{ODE}) with the initial value $\tld \f(\d/2) = 1$, $\tld \f'(\d/2) = 0$. By (\cite{Protter}, Chapter 1, Section 7, Lemma 1, note that (\ref{ODE}) is regular on $(\d/2, \d)$), there is $\mu_0 > 0$ such that $\tld \f$ has a zero $\tld s \in (\d/2, \d)$. On the other hand, the generalized maximum principle applies to show that $\tld\f/\f|_{[\d/2, \tld s]}$ can not have a maximum in $(\d/2, \tld s)$. In addition, since $\tld\f'(\d/2) = 0$ but $\f'(\d/2) = -\rc{\om(\d/2)} \int_0^{\d/2}\mu\f(x)dx < 0$, $(\tld\f/\f)'(\d/2) > 0$, so the maximum is not achieved at $x = \d/2$, nor at $\tld s$ because $\tld\f(s) = 0$. Now the ocntradiction shows that $\d = 0$, or $s_1 \to 0$ as $\mu \to +\oo$.
\end{proof}

\begin{prop}\label{wellpose}
Supppose $\om$ satisfies (a) or (b) in Lemma \ref{ODExist} and one of the two conditions at $x=1$:

\indent (a') $\om(1) > 0$.

\indent (b') $\om(1) = 0$ but $\om(x)$ is decreasing in $x$ for some short interval $[1 - \d, 1]$.

Then there is a smallest $\mu > 0$ and a unique $\f$ satisfying (\ref{ODE}) with Neumann boundary conditions at both 0 and 1 in the classical sense.
\end{prop}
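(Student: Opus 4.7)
The plan is a shooting argument matching the two Neumann solutions, one from each endpoint. For each $\mu > 0$, Lemma~\ref{ODExist} applied at $x = 0$, together with its mirror under $x \mapsto 1-x$ (which interchanges hypotheses (a)/(b) with (a')/(b')), will produce unique solutions $\phi_0(\cdot\,; \mu) \in C^1[0,1)$ and $\psi_1(\cdot\,; \mu) \in C^1(0,1]$ of (\ref{ODE}), normalized by $\phi_0(0) = \psi_1(1) = 1$ and $\phi_0'(0) = \psi_1'(1) = 0$; both depend continuously on $\mu$. Since (\ref{ODE}) is regular on each compact subinterval of $(0, 1)$, both extend to classical solutions on all of $(0,1)$.

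A $\phi$ as in the proposition must be a scalar multiple of $\phi_0(\cdot\,; \mu)$ that in addition satisfies the Neumann condition at $1$, i.e.\ is proportional to $\psi_1(\cdot\,; \mu)$. The Wronskian $W(\mu) := \omega(x)\bigl(\phi_0 \psi_1' - \phi_0' \psi_1\bigr)$ is independent of $x$, since differentiating and using $(\omega \phi_0')' = -\mu\omega\phi_0$ and the analogous equation for $\psi_1$ yields zero. Proportionality is therefore equivalent to $W(\mu) = 0$.

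To locate the smallest such $\mu$, I would track zeros: let $s_0(\mu)$ be the smallest zero of $\phi_0(\cdot\,; \mu)$ in $(0, 1]$, set to $+\infty$ if none exists, and let $t_1(\mu)$ be the largest zero of $\psi_1(\cdot\,; \mu)$ in $[0, 1)$, set to $-\infty$ if none exists. By Lemma~\ref{s1decmu} and its mirror, $s_0$ is strictly decreasing and $t_1$ strictly increasing in $\mu$; by Lemma~\ref{s1oo0} and its mirror, $s_0(\mu) > 1$ for all sufficiently small $\mu > 0$ and $s_0(\mu) \to 0$ as $\mu \to \infty$, and symmetrically $t_1(\mu) < 0$ for small $\mu$ while $t_1(\mu) \to 1$ as $\mu \to \infty$. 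Continuity in $\mu$, inherited from the Picard iteration in Lemma~\ref{ODExist}, transfers to continuity of $s_0$ and $t_1$ wherever they lie in $(0,1)$, so the intermediate value theorem delivers a unique $\mu^* > 0$ with $s_0(\mu^*) = t_1(\mu^*) =: x_* \in (0,1)$.

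At this $\mu^*$ the nontrivial solutions $\phi_0(\cdot\,; \mu^*)$ and $\psi_1(\cdot\,; \mu^*)$ both vanish at the interior regular point $x_*$, so by uniqueness of the initial-value problem at $x_*$ they are proportional; $\phi_0$ therefore inherits the Neumann condition at $1$ from $\psi_1$, providing the desired eigenfunction, unique up to scalar because the matching eigenspace inside the two-dimensional solution space is one-dimensional. That $\mu^*$ is smallest will follow from the strict monotonicity of $s_0$ and $t_1$: for $0 < \mu < \mu^*$ one has $s_0(\mu) > t_1(\mu)$, so the two solutions share no zero and are linearly independent, ruling out any eigenfunction. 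The only real subtlety is continuity of the shooting data near the possibly singular endpoints, and this is already supplied by the uniform Picard convergence established in Lemma~\ref{ODExist}.
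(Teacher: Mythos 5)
Your argument follows essentially the same shooting strategy as the paper: shoot from $x=0$ (first zero $s_0(\mu)$, decreasing from $+\infty$ to $0$) and from $x=1$ (last zero $t_1(\mu)$, increasing from $-\infty$ to $1$), find the crossing point via IVT, and match the two solutions at the common interior zero using uniqueness for the regular ODE. The Wronskian $W(\mu)$ you introduce is correct (it is constant in $x$ by the usual computation with $(\om\phi')'=-\mu\om\phi$) but you never actually use it; your argument proceeds entirely through the zero-matching, exactly as the paper's does. One small point worth tightening: you justify that $\mu^*$ is smallest by saying that for $\mu<\mu^*$ the solutions ``share no zero and are linearly independent.'' When neither $\phi_0$ nor $\psi_1$ has a zero in $[0,1]$, ``share no zero'' holds vacuously, so you still need a reason they cannot be proportional; the cleanest one is that a Neumann eigenfunction with $\mu>0$ must satisfy $\int_0^1\om\phi\,dx=0$ (integrate the equation), hence change sign, while $\phi_0>0$. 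The paper is equally terse on this point, so your treatment is at least as careful as the original.
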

\begin{proof}
Applying Lemmas \ref{s1decmu} and \ref{s1oo0} to $-\om(1-x)$, we know that there is a solution $\tld \f$ of (\ref{ODE}) with the initial value $\tld \f(1) = -1$, $\tld \f'(1) = 0$ with the maximum zero $\tld s_1(\mu)$ an increasing function of $\mu$ rising from $-\oo$ to 1 when $\mu$ varies from 0 to $+\oo$. Therefore we can find $\mu > 0$ such that $s_1(\mu) = \tld s_1(\mu) = s_1$. Since both $\f'(s_1)$ and $\tld \f'(s_1) < 0$, $\f(x)$ and $\f'(s_1) \tld \f(x) /\tld \f'(s_1)$ and their derivatives agree at $x=s_1$. Now by the uniqueness of the solution to the regular ODE, we can piece $\f$ and $\tld\f$ together into a solution satisfying (\ref{ODE}) with Neumann boundary conditions at both 0 and 1. The uniqueness of the eigenfunction has already been covered in Lemma \ref{ODExist}.
\end{proof}

\begin{rem}
It is clear that the cross-sectional volume $\om(x)$ satisfies all the conditions of Proposition \ref{wellpose}.
\end{rem}

\section{Acknowledgements}

This paper is made possible by the Undergraduate Research Opportunity Program of the MIT math department, to which the author wishes to express his thanks. Thanks are also due to Prof. David Jerison for suggesting such an interesting topic and for some very helpful conversations. Finally, it should be mentioned that Section 2 of this paper is a summary of the results in \cite{CJK2009} that the author finds useful.

\bibliographystyle{plain}
\bibliography{nodal}
 
\end{document}